\renewcommand{\thefootnote}{\fnsymbol{footnote}}
\newcommand{\pa}{\textsf{\textsl{PA}}}
\newcommand{\q}{\textsf{\textsl{Q}}}
\long\def\sfootnote[#1]#2{\begingroup
\def\thefootnote{\fnsymbol{footnote}}
\footnote[#1]{#2}\endgroup}
\newtheorem{theorem}{Theorem}[section]
\newtheorem{lemma}[theorem]{Lemma}
\newtheorem{corollary}[theorem]{Corollary}
\newtheorem{proposition}[theorem]{Proposition}
\newtheorem{remark}[theorem]{Remark}
\newenvironment{proof}{\noindent\mbox{\bf Proof.}}
{\hfill\mbox{\ding{111}}\bigskip}
\begin{document}

\pagestyle{fancy}
\lhead[{\tt page \thepage \ (of \pageref{LastPage})}]{\qquad\qquad  {\bf  G\"odel--Rosser's Incompleteness Theorems for non-{\sc re} Theories}}
\chead[]{}
\rhead[{\bf  G\"odel--Rosser's Incompleteness Theorems for non-{\sc re} Theories}\qquad\qquad ]{{\tt page \thepage \ (of \pageref{LastPage})}}
\lfoot[\copyright\ {\sc  Saeed Salehi \& Payam Seraji 2016}]{$\mathcal{S}\alpha\epsilon\epsilon\partial\mathcal{S}
\alpha\ell\epsilon\hbar\imath\!\centerdot\!\texttt{ir}$}
\cfoot[{\footnotesize {\tt  }}]{{\footnotesize {\tt  }}}
\rfoot[$\mathcal{S}\alpha\epsilon\epsilon\partial\mathcal{S}
\alpha\ell\epsilon\hbar\imath\!\centerdot\!\texttt{ir}$]{\copyright\ {\sc Saeed Salehi \& Payam Seraji 2016}}
\renewcommand{\headrulewidth}{1pt}
\renewcommand{\footrulewidth}{1pt}
\thispagestyle{empty}

\begin{table}
\begin{center}
\hspace{0.75em}
\begin{tabular}{|| c || l  | l ||}

\hline
 \multirow{7}{*}{\includegraphics[scale=0.5]
 {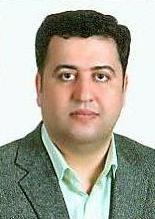}}&    &
 \multirow{7}{*}{ \ \ } \ \ \ \ \\
 &     \ \ {\large{\sc Saeed Salehi}}  \ \  \ & \ \    Tel: \, +98 (0)41 3339 2905      \\
 &   \ \ Department of Mathematics \ \  \ & \ \ Fax: \ +98 (0)41 3334 2102    \\
 &   \ \ University of Tabriz \ \ \  & \ \ E-mail: \!\!{\tt /root}{\sf @}{\tt SaeedSalehi.ir/}     \\
 &  \ \ P.O.Box 51666--17766 \ \ \ &   \ \ \ \ {\tt /SalehiPour}{\sf @}{\tt TabrizU.ac.ir/}     \\
 &   \ \ Tabriz, IRAN \ \ \ & \ \ Web: \  \ {\tt http:\!/\!/SaeedSalehi.ir/}    \\
 &    &    \\
 \hline
\hline
 \multirow{7}{*}{\includegraphics[scale=0.7]
 {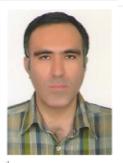}}&    &
 \multirow{7}{*}{ \ \ } \ \ \ \ \\
 &     \ \ {\large{\sc Payam Seraji}}  \ \  \ &  {\tt  http://mathoverflow.net/}           \\
 &   \ \ Department of  Mathematics \ \  \ & {\tt users/65878/payam-seraji}      \\
 &   \ \ University of Tabriz  \ \ \  & \ \ E-mail: \!\!{\tt   p$_-$seraji}{\sf @}{\tt  Yahoo.com}     \\
 &  \ \  P.O.Box 51666--17766 \ \ \ &   \ \ \ \ {\tt p$_-$seraji}{\sf @}{\tt TabrizU.ac.ir}      \\
 &    \ \ Tabriz, IRAN \ \ \ &    \\
 &    &    \\
 \hline
\end{tabular}
\end{center}
\bigskip
\end{table}

\vspace{1.5em}

\begin{center}

\bigskip

{\bf {\Large  G\"odel--Rosser's Incompleteness Theorems  \\[0.5em] for Non--Recursively Enumerable Theories
}}
\end{center}

\vspace{1.5em}

\begin{abstract}\noindent
G\"odel's First Incompleteness Theorem is generalized to definable theories, which are not necessarily recursively enumerable, by using a couple of syntactic-semantic notions;  one   is the consistency of a theory with the set of all true $\Pi_n$-sentences or equivalently the $\Sigma_n$-soundness of the theory, and the other is $n$-consistency the restriction of $\omega$-consistency to the $\Sigma_n$-formulas. It is also shown that Rosser's Incompleteness Theorem does not generally hold for definable non-recursively enumerable theories; whence G\"odel-Rosser's Incompleteness Theorem is optimal in a sense. Though the proof of the incompleteness theorem using the $\Sigma_n$-soundness assumption is constructive, it is shown that there is no constructive proof for the incompleteness theorem using the $n$-consistency assumption, for $n\!>\!2$.

\bigskip

\centerline{${\backsim\!\backsim\!\backsim\!\backsim\!\backsim
\!\backsim\!\backsim\!
\backsim\!\backsim\!\backsim\!\backsim\!\backsim\!\backsim
\!\backsim\!
\backsim\!\backsim\!\backsim\!\backsim\!\backsim\!\backsim
\!\backsim\!
\backsim\!\backsim\!\backsim\!\backsim\!\backsim\!\backsim
\!\backsim\!
\backsim\!\backsim\!\backsim\!\backsim\!\backsim\!\backsim
\!\backsim\!
\backsim\!\backsim\!\backsim\!\backsim\!\backsim\!\backsim
\!\backsim\!
\backsim\!\backsim\!\backsim\!\backsim\!\backsim\!\backsim
\!\backsim\!
\backsim\!\backsim\!\backsim\!\backsim\!\backsim\!\backsim
\!\backsim\!
\backsim\!\backsim\!\backsim\!\backsim\!\backsim\!\backsim
\!\backsim\!
\backsim\!\backsim\!\backsim\!\backsim\!\backsim\!\backsim
\!\backsim\!
\backsim\!\backsim\!\backsim}$}

\bigskip

\noindent {\bf 2010 Mathematics Subject Classification}:  03F40 $\cdot$   03F30 $\cdot$   03D35 $\cdot$   03D25.

\noindent {\bf Keywords}:  G\"odel's  Incompleteness $\cdot$     Recursive Enumerability $\cdot$  Rosser's Trick $\cdot$  Craig's Trick.

\bigskip

\bigskip

\bigskip

{\footnotesize
\noindent {\bf Acknowledgements} \
This is a part of the Ph.D. thesis of the second author written in the University of Tabriz  under the supervision of the first author who is partially supported by grant  $\textrm{N}^{\underline{\sf o}}$~93030033 from
  $\bigcirc\hspace{-2ex}{\not}\hspace{0.3ex}\bullet
  \hspace{-1.5ex}{\not}\hspace{-0.2ex}\bigcirc$
  $\mathbb{I}\mathbb{P}\mathbb{M}$.
}

\end{abstract}

\bigskip
\bigskip
\bigskip
\bigskip

\hspace{.75em} \fbox{\textsl{\footnotesize Date: 20 January  2016  (20.01.16)}}

\vfill

\bigskip
\noindent\underline{\centerline{}}
\centerline{\tt page 1 (of \pageref{LastPage})}


\newpage
\setcounter{page}{2}
\SetWatermarkAngle{65}
\SetWatermarkLightness{0.925}
\SetWatermarkScale{2.5}
\SetWatermarkText{\!\!\!\!\!\!\!\!\!\!\!\!\!\!\!\!\!\!\!
{\sc MANUSCRIPT (Submitted)}}


\section{Introduction and Preliminaries}\label{sec-intro}
G\"odel's First Incompleteness Theorem is usually taken to be \textsl{the incompleteness of the first order theory of Peano Arithmetic} $\pa$. While  $\pa$ is not a complete theory, the theorem states much more than that. One of the most misleading ways for stating the theorem is:  \textsl{any sound theory containing} $\pa$ \textsl{is incomplete}, where a theory is called sound when all its axioms are true in the standard model of natural numbers $\mathbb{N}$. A quick counterexample for this statement, often asked by  new learners of the incompleteness, is that \textsl{but the theory of true arithmetic ${\rm Th}(\mathbb{N})$ is complete?!}, where ${\rm Th}(\mathbb{N})$ is the set of sentences that are true in the standard model of natural numbers. Of course, the obvious answer is that ${\rm Th}(\mathbb{N})$ {\sl is not recursively enumerable} ({\sc re} for short). So, the right rewording of G\"odel's First Incompleteness Theorem in its (weaker) semantic form is that
\textsl{any sound and {\sc re} theory containing} $\pa$  \textsl{is incomplete}.  Now, a natural second question is: \textsl{what about non-{\sc re} theories} (\textsl{that are sound and contain} $\pa$)? Again the same obvious answer shows up: \textsl{${\rm Th}(\mathbb{N})$ is not {\sc re}} (by the very theorem of G\"odel's first incompleteness) \textsl{and is complete}. So, the question of  the incompleteness of non-{\sc re} theories should come down to  more specific ones, at least to finitely representable theories, or, as the logicians say, definable ones. Hence, \textsl{do we have the incompleteness of definable theories} (\textsl{which are sound and contain} $\pa$)? This question has been answered affirmatively in the literature; see e.g. \cite{smullyan} or \cite{sereny04}.  G\"odel's original first incompleteness theorem did not assume the soundness of the theory in question, and he used the notion of $\omega$-consistency for that purpose. Later it was found out that the weaker notion of 1-consistency suffices for the theorem (see e.g. \cite{isaacson} or \cite{smith}).   By generalizing this equivalent notion to higher degrees ($\Pi_n$ in general) we will prove some generalizations of G\"odel's first incompleteness theorem for definable theories below.
Finally, Rosser's Trick proves G\"odel's result without assuming the 1-consistency of the theory. So, G\"odel-Rosser's Incompleteness Theorem, assuming  only the consistency of the theory, states  that \textsl{any consistent and {\sc re} theory containing} $\pa$  \textsl{is incomplete}. It is tempting to weaken the condition of recursive enumerability of the theory in this theorem; but we will see below that this is not possible. We can thus argue that G\"odel-Rosser's theorem is optimal in a sense.

\subsection{Some Notation and Conventions}
We fix the following notation and conventions (mostly from  \cite{hajekpudlak,isaacson,kaye,smith,smullyan}).
Fix a language of arithmetic, like $\{0,S,+,\times,\leqslant\}$ (as in  \cite{hajekpudlak}) or $\{0,1,+,\times,<\}$ (as in  \cite{kaye}).
\begin{itemize}
\item For any natural number $n\in\mathbb{N}$ the term $\overline{n}$ represents this number in the fixed arithmetical language (which could be $S\cdots S(0)$ or $1+\cdots+1$ [$n$-times]). For a fixed G\"odel numbering of syntax, $\ulcorner\alpha\urcorner$ denotes the G\"odel number of the object $\alpha$; when there is no ambiguity we will write simply $\ulcorner\alpha\urcorner$  for the term $\overline{\ulcorner\alpha\urcorner}$.  Any G\"odel numbering consists of coding sequences; if $m$ is the code of a sequence, then the formula ${\sf Seq}(m)$ expresses this fact, and  its length is denoted by $\ell{\rm en}(m)$ and for any number $l\!<\!\ell{\rm en}(m)$ the $l^{\rm th}$ member of $m$ is denoted by $[m]_l$. A sequence $m$ is thus $\langle[m]_0,[m]_1,\cdots,[m]_{\ell{\rm en}(m)-1}\rangle$; and for any $k\!\leqslant\!\ell{\rm en}(m)$, the initial segment of $m$ with length $k$ is denoted by $\langle m\!\downharpoonright\!k\rangle$, that is $\langle[m]_0,[m]_1,\cdots,[m]_{k-1}\rangle$. Note that $\langle m\!\downharpoonright\!0\rangle=\emptyset$ and $\langle m\!\downharpoonright\!\ell{\rm en}(m)\rangle=m$. If $m$ is the G\"odel code of  a sentence, then ${\sf Sent}(m)$ expresses this fact.
    For a sequence of sentences like $m$, the formula ${\sf ConjSeq}(k,m)$ means that ``$k$ is the (G\"odel code of the) conjunction of all the members of $m$'', i.e., $k=\ulcorner\bigwedge\!\!\!\!\!\bigwedge_{i<\ell en(m)}\varphi_i\urcorner$ where $[m]_i=\ulcorner\varphi_i\urcorner$.
    The propositional connectives may act (as numeral partial functions) on natural numbers; for example $\neg m$ for $m\in\mathbb{N}$ is $\ulcorner\neg\alpha\urcorner$ where $m=\ulcorner\alpha\urcorner$, and for any $\circ\in\{\wedge,\vee,\rightarrow\}$ and $m,k\in\mathbb{N}$, $m\circ k=\ulcorner\alpha\circ\beta\urcorner$ where $m=\ulcorner\alpha\urcorner$ and $k=\ulcorner\beta\urcorner$.
    \item The classes of formulas $\{\Sigma_n\}_{n\in\mathbb{N}}$ and $\{\Pi_n\}_{n\in\mathbb{N}}$ are defined in the standard way \cite{hajekpudlak,kaye}: $\Sigma_0=\Pi_0$ is the class of bounded formulas (in which every universal quantifier has the form $\forall x ([x\leqslant t \rightarrow\cdots]$ and every existential quantifier has the form $\exists x [x\leqslant t\wedge\cdots]$), and the class $\Sigma_{n+1}$ contains the closure of $\Pi_n$ under the existential quantifiers, and is closed under disjunction, conjunction, existential quantifiers and bounded universal quantifiers; similarly,  the class $\Pi_{n+1}$ contains the closure of $\Sigma_n$ under the universal quantifiers, and  is closed under disjunction, conjunction, universal quantifiers and bounded existential quantifiers. By definition $\Delta_n=\Sigma_n\cap\Pi_n$. Let us note that the negation of a $\Sigma_n$-formula is a $\Pi_n$-formula, and vice versa; and that the formulas ${\sf Seq}(-)$,  ${\sf Sent}(-)$  and ${\sf ConjSeq}(-)$ can be taken to be $\Sigma_0$, and the functions $\ell{\rm en}(-)$, $[-]_{-}$ and $\langle -\!\downharpoonright\!-\rangle$ are definable by $\Sigma_0$-formulas.
\item The set of all true arithmetical formulas is denoted by ${\rm Th}(\mathbb{N})$; that is $\{\theta\!\in\!{\rm Sent}\mid\mathbb{N}\models\theta\}$. Similarly, for any $n$, $\Sigma_n$-${\rm Th}(\mathbb{N})=\{\theta\!\in\!\Sigma_n\text{-}{\rm Sent}\mid\mathbb{N}\models\theta\}$ and $\Pi_n$-${\rm Th}(\mathbb{N})=\{\theta\!\in\!\Pi_n\text{-}{\rm Sent}\mid\mathbb{N}\models\theta\}$. While by Tarski's Undefinability Theorem   the (G\"odel numbers of the members of the) set ${\rm Th}(\mathbb{N})$ is not definable, for $n\!>\!0$ the (G\"odel numbers of the members of the) set $\Sigma_n$-${\rm Th}(\mathbb{N})$ is definable by the $\Sigma_n$-formula $\Sigma_n$-${\sf True}(x)$ (stating that ``$x$ is the G\"odel number of a true $\Sigma_n$-sentence'') and the (G\"odel numbers of the members of the) set $\Pi_n$-${\rm Th}(\mathbb{N})$ is definable by the $\Pi_n$-formula $\Pi_n$-${\sf True}(x)$ (stating that ``$x$ is the G\"odel number of a true $\Pi_n$-sentence''). Robinson's Arithmetic is denoted by $\q$ which is a weak (induction-free) fragment of $\pa$.
\item  A definable theory is the  set of all logical consequences of a set  of sentences  that (the set of the G\"odel numbers of its members) is  definable by an arithmetical formula    ${\sf Axioms}_T(x)$ [meaning that $x$ is the G\"odel number of an axiom of $T$].  The formula ${\sf ConjAx}_T(x)$ states that ``$x$ is the G\"odel code of a formula which is a conjunction of some axioms of $T$'', i.e., $x=\ulcorner\bigwedge\!\!\!\!\!\bigwedge_{i=1}^\ell
    \varphi_i\urcorner$ where  $\bigwedge\!\!\!\!\!\bigwedge_{i=1}^\ell{\sf Axioms}_T(\ulcorner\varphi_i\urcorner)$.     
    The proof predicate of first order logic is denoted by ${\sf Proof}(y,x)$ which is  a $\Sigma_0$-formula stating  that ``$y$ is the code of a proof of the formula with code $x$ in the first order logic''. So, for a definable theory $T$ the provability predicate of $T$ is the formula ${\sf Prov}_T(x)=\exists y,z\big[{\sf ConjAx}_T(z)\wedge{\sf Proof}(y,z\rightarrow x)\big]$; also the consistency predicate of  $T$ is  ${\sf Con}(T)=\neg{\sf Prov}_T(\ulcorner 0\neq 0\urcorner)$. Let us note that ${\sf Prov}_T$ defines the set of $T$-provable formulas, the deductive closure of  (the axioms of) $T$. For a class of formulas $\Gamma$ the theory $T$ is called $\Gamma$-definable when ${\sf Axioms}_T\in\Gamma$. Let us also note that if ${\sf Axioms}_T\!\in\!\Sigma_{n+1}$ or  ${\sf Axioms}_T\!\in\!\Pi_{n}$ then ${\sf ConjAx}_T\!\in\!\Sigma_{n+1}$ or ${\sf ConjAx}_T\!\in\!\Pi_{n}$, respectively, and so in either case ${\sf Prov}_T\!\in\!\Sigma_{n+1}$.

\item Theory $T$ decides the sentence $\varphi$ when either $T\vdash\varphi$ or $T\vdash\neg\varphi$. A theory is called complete when it can decide every sentence in its language.  A theory $T$ is called $\Gamma$-deciding when it can decide any sentence in  $\Gamma$. In the literature, a theory $T$ is called $\Gamma$-complete when for any sentence  $\varphi\in\Gamma$, if $\mathbb{N}\models\varphi$ then $T\vdash\varphi$. Note that if a sound theory is $\Gamma$-deciding then it is $\Gamma$-complete. A theory $T$ is called $\omega$-consistent when for no formula $\varphi$ both the conditions $(i)\; T\vdash\neg\varphi(\overline{n})$ for all $n\!\in\!\mathbb{N}$, and $(ii)\; T\vdash\exists x\varphi(x)$ hold together.  It is called $n$-consistent when for no formula $\varphi\!\in\!\Sigma_n$ with $\varphi=\exists x\psi(x)$ and $\psi\!\in\!\Pi_{n-1}$ one has $(i)\; T\vdash\neg\psi(\overline{n})$ for all $n\!\in\!\mathbb{N}$, and $(ii)\; T\vdash\varphi$. Theory $T$ is called    $\Gamma$-Sound, when for any sentence $\varphi\!\in\!\Gamma$, if $T\vdash\varphi$ then $\mathbb{N}\models\varphi$. For example, any consistent theory containing $\Pi_n\text{-}{\rm Th}(\mathbb{N})$ is $\Sigma_n$-sound. Let us note that, since ${\rm Th}(\mathbb{N})$ is a complete and thus a maximally consistent theory, the soundness of $T$ is equivalent to ${\rm Th}(\mathbb{N})\subseteq T$ and to the consistency of $T+{\rm Th}(\mathbb{N})$. In general, for any consistent extension $T$ of $\q$, the $\Sigma_n$-soundness of $T$ is equivalent to the consistency of $T+\Pi_n\text{-}{\rm Th}(\mathbb{N})$ 
        (cf. Theorems~26,31 of~\cite{isaacson}). Also, for any $T\supseteq\q$, since $\q$ is a $\Sigma_1$-complete theory, the consistency of $T$ is equivalent to the consistency of $T+\Pi_0\text{-}{\rm Th}(\mathbb{N})$, i.e.  ${\sf Con}\big(T+\Pi_0\text{-}{\rm Th}(\mathbb{N})\big)$, which, in turn, is equivalent to the $\Sigma_0$-soundness of $T$ (cf. Theorem~5 of~\cite{isaacson}).
\end{itemize}
\vspace{-1em}
\begin{table}[h]
\begin{center}
  \begin{tabular}{|rcccl|}
  \hline
  {\footnotesize Semantic Condition} &  & {\footnotesize Conventional Notation}  &   & {\footnotesize Syntactic Condition} \\
  \hline
  \hline
    ({\small $\Sigma_\infty\!$})Soundness  of $T$ & $\equiv$ & $\mathbb{N}\models T$ & $\equiv$ &
    ${\sf Con}\big(T+$
  [{\small $\Pi_\infty$}]${\rm Th}(\mathbb{N})\big)$ \\
    \hline
     $\Sigma_n$-Soundness  of $T$ & $\equiv$ & ---------  & $\equiv$ & ${\sf Con}\big(T+\Pi_n\text{-}{\rm Th}(\mathbb{N})\big)$ \\
         \hline
     $\Sigma_1$-Soundness  of $T$ & $\equiv$ & $1\text{-}{\sf Con}(T)$ & $\equiv$ & ${\sf Con}\big(T+\Pi_1\text{-}{\rm Th}(\mathbb{N})\big)$ \\
          \hline
       $\Sigma_0$-Soundness  of $T$ & $\equiv$ & ${\sf Con}(T)$ & $\equiv$ & ${\sf Con}\big(T+\Pi_0\text{-}{\rm Th}(\mathbb{N})\big)$ \\
  \hline
\end{tabular}
\vspace{-1.5em}
\end{center}
\end{table}



\subsection{Some Earlier Attempts and Results}
By G\"odel's incompleteness theorem, $\pa$ (and every {\sc re} extension of it) is not $\Pi_1$-complete; then what about $\textbf{S}=\pa+\Pi_{1}\text{-}{\rm Th}(\mathbb{N})$? Is this theory complete? For sure, it is $\Pi_1$-complete and $\Sigma_1$-complete; but can it be, say,   $\Pi_2$-complete? Let us note that $\textbf{S}$ is a $\Pi_1$-definable theory; i.e. ${\sf Axioms}_{\textbf{S}}\in\Pi_1$, and so ${\sf Prov}_{\textbf{S}}\in\Sigma_2$. So, it is natural to ask if the incompleteness phenomena still hold for definable arithmetical theories.

\subsubsection{Results of Jeroslow (1975)}
Jeroslow \cite{jeroslow} showed in 1975 that when the set of theorems of a consistent theory that contains $\pa$ is $\Delta_2$-definable, then it cannot contain the set of all true $\Pi_1$-sentences.

\bigskip
\centerline{
\fbox{$\textrm{Jeroslow (1975)}:\qquad  \pa\subseteq T  \;\; \& \;\;  {\sf Prov}_T\!\in\!\Delta_2 \;\; \& \;\; {\sf Con}(T) \;\;\; \Longrightarrow \;\;\; \Pi_1\text{-}{\rm Th}(\mathbb{N})\not\subseteq T$}}
\bigskip

\noindent
This result casts a new light on a classical theorem on the existence of a $\Delta_2$-definable complete extension of $\pa$ (see \cite{smorynski}): no such complete extension can contain all the true $\Pi_1$-sentences. Note that one cannot weaken the assumption ${\sf Prov}_T\!\in\!\Delta_2$ in the theorem, to,  say, ${\sf Prov}_T\!\in\!\Sigma_2$ because e.g. for the theory $\textbf{S}$ above we have ${\sf Prov}_\textbf{S}\!\in\!\Sigma_2$ and $\Pi_1\text{-}{\rm Th}(\mathbb{N})\subseteq\textbf{S}$.

\subsubsection{Results of H\'ajek (1977)}
Jeroslow's theorem was  generalized by H\'ajek (\cite{hajek}) who showed that
when the set of theorems of a consistent theory that contains $\pa$ is $\Delta_n$-definable, then it cannot be $\Pi_{n-1}$-complete:

\bigskip
\centerline{
\fbox{$\textrm{H\'ajek (1977)}:\qquad  \pa\subseteq T  \;\; \& \;\;  {\sf Prov}_T\!\in\!\Delta_n \;\; \& \;\; {\sf Con}(T) \;\;\; \Longrightarrow \;\;\; \Pi_{n-1}\text{-}{\rm Th}(\mathbb{N})\not\subseteq T$}}
\bigskip

\noindent Another result of H\'ajek (\cite{hajek}) is that if a deductively closed extension of $\pa$ is $\Pi_n$-definable and $n$-consistent, then it cannot be $\Pi_{n-1}$-complete:

\bigskip
\centerline{
\fbox{$\textrm{H\'ajek (1977a)}:\qquad  \pa\subseteq T  \;\; \& \;\;  {\sf Prov}_T\!\in\!\Pi_n \;\; \& \;\; n\text{--}{\sf Con}(T) \;\;\; \Longrightarrow \;\;\; \Pi_{n-1}\text{-}{\rm Th}(\mathbb{N})\not\subseteq T$}}
\bigskip

\noindent He also showed that no such theory can be complete; i.e., when $\pa\subseteq T$ \& ${\sf Prov}_T\!\in\!\Pi_n$ \&  $n\text{--}{\sf Con}(T)$ then $T$ is incomplete (indeed, a $\Pi_n$-sentence is independent from $T$). Here, we generalize this theorem by showing the existence of an independent   $\Pi_{n-1}$-sentence:

\bigskip
\centerline{
\fbox{$\textrm{{\bf Corollary~\ref{cor-hajek}}}:\qquad  \pa\subseteq T  \;\; \& \;\;  {\sf Prov}_T\!\in\!\Pi_n \;\; \& \;\; n\text{--}{\sf Con}(T) \;\;\; \Longrightarrow \;\;\; T\not\in\Pi_{n-1}\text{-}\textrm{Deciding}$}}
\bigskip

\begin{remark}[On the Proof of Theorem~2.5 in \cite{hajek}]{\rm
In Theorem~2.5 of \cite{hajek}  an  $n$-consistent theory $T$   is assumed to contain Peano Arithmetic (and be closed under deduction)  and its set of theorems  is assumed to be   $\Pi_n$-definable  for some $n\geqslant 2$. Then it is shown that    (1) $\Pi_{n-1}\text{-}{\rm Th}(\mathbb{N})\not\subseteq T$,
 and a proof is presented for the fact that
(2) $T$ is incomplete.}

\noindent {\rm In the proof of (1) for the sake of contradiction it is assumed that $\Pi_{n-2}\text{-}{\rm Th}(\mathbb{N})\subseteq T$; and at the end of the proof of (2) the inconsistency of $T$ has been inferred from the $T$-provability of  a false $\Pi_{n-2}$-sentence (denoted by $\tau_1(\overline{p},\overline{m},\overline{\varphi})$ in~\cite{hajek}). Of course, when $\Pi_{n-2}\text{-}{\rm Th}(\mathbb{N})\subseteq T$ then no false $\Pi_{n-2}$-sentence is provable in $T$. Probably, the proof did not intend to make use of the (wrong) assumption (of $\Pi_{n-2}\text{-}{\rm Th}(\mathbb{N})\subseteq T$); rather the intention could have been
 using  the completeness and $n$-consistency of $T$ to show that $T$ cannot prove any false $\Pi_{n-2}$-sentence. This is the subject of the next lemma (\ref{lem-hajek}) which fills an inessential minor gap in the proof of   \cite[Theorem~2.5]{hajek}.
}\hfill\ding{71}\end{remark}

%
%
%
%
%

The following lemma generalizes Theorem~20 of \cite{isaacson} which states  that {\sl the true arithmetic} ${\rm Th}(\mathbb{N})$ {\sl is the only $\omega$-consistent extension of} $\pa$ (indeed $\q$)  {\sl that is complete}.

\begin{lemma}[A Gap in the Proof of  Theorem~2.5(2) in \cite{hajek}]\label{lem-hajek}
Any $n$-consistent and $\Pi_n$-deciding extension of {\rm $\q$} is $\Pi_n$-complete.
\end{lemma}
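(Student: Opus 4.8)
The plan is to prove the contrapositive of $\Pi_n$-completeness: assuming $T$ is $\Pi_n$-deciding and $n$-consistent, I want to show that every false $\Pi_n$-sentence is \emph{not} provable in $T$ (equivalently, no false $\Pi_n$-sentence is a theorem), which combined with the deciding hypothesis yields that every true $\Pi_n$-sentence is provable. So let $\theta$ be a $\Pi_n$-sentence with $\mathbb{N}\not\models\theta$. Since $T$ is $\Pi_n$-deciding, either $T\vdash\theta$ or $T\vdash\neg\theta$. My goal is to rule out $T\vdash\theta$; once that is done, $T\vdash\neg\theta$ follows, and applying this to $\neg\varphi$ for any true $\Pi_n$-sentence $\varphi$ (whose negation is a false $\Sigma_n$-sentence) will need a symmetric handling, so I will in fact argue directly that $T$ proves every true $\Pi_n$-sentence.

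The key step is to extract a concrete $n$-inconsistency witness from the provability of a false $\Pi_n$-sentence. Write the false $\Pi_n$-sentence as $\theta=\forall x\,\psi(x)$ with $\psi\in\Sigma_{n-1}$. Because $\theta$ is false in $\mathbb{N}$, there is a least $k\in\mathbb{N}$ with $\mathbb{N}\not\models\psi(\overline{k})$, i.e. $\mathbb{N}\models\neg\psi(\overline{k})$. Now $\neg\psi(\overline{k})$ is (equivalent to) a $\Pi_{n-1}$-sentence that is \emph{true}, so I would like $T$ to prove it. This is exactly where the hypothesis must do real work: I expect to invoke that $T$ is $\Pi_{n-1}$-complete, which by the inductive structure I can either assume as an outer induction hypothesis on $n$ or derive from the stated machinery (a $\Pi_n$-deciding theory is in particular $\Pi_{n-1}$-deciding, and an $n$-consistent theory is $(n-1)$-consistent, so by induction on $n$ the theory is $\Pi_{n-1}$-complete; the base case $n=0$ or $n=1$ being handled by $\Sigma_0$/$\Sigma_1$-completeness of $\q$). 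Hence $T\vdash\neg\psi(\overline{k})$ for that witness $k$.

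With both $T\vdash\forall x\,\psi(x)$ and $T\vdash\neg\psi(\overline{k})$ in hand, I derive an outright contradiction: from $T\vdash\forall x\,\psi(x)$ one gets $T\vdash\psi(\overline{k})$ by universal instantiation, which together with $T\vdash\neg\psi(\overline{k})$ makes $T$ inconsistent, contradicting $n$-consistency (which entails ordinary consistency). Therefore no false $\Pi_n$-sentence is provable in $T$. Combined with the $\Pi_n$-deciding hypothesis, for any \emph{true} $\Pi_n$-sentence $\varphi$ the alternative $T\vdash\neg\varphi$ is impossible (as $\neg\varphi$, a false $\Sigma_n$-sentence, would force $T\vdash\varphi$ false—so I instead argue that $T\vdash\varphi$ directly), so $T\vdash\varphi$, establishing $\Pi_n$-completeness.

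The main obstacle I anticipate is the clean use of $n$-consistency rather than mere consistency: if I only want ordinary consistency the above already suffices, but the lemma is stated for $n$-consistency because in the genuine setting $T$ may prove false $\Pi_{n-1}$-sentences unless the $n$-consistency constraint is deployed to secure $\Pi_{n-1}$-completeness at the inductive step. Concretely, the delicate point is justifying $T\vdash\neg\psi(\overline{k})$: I cannot simply assume $T$ proves all true $\Pi_{n-1}$-sentences, so the argument must either run the induction on $n$ with the deciding-plus-consistency package descending to level $n-1$, or unwind the definition of $n$-consistency so that the failure of $\theta$ at the witness $k$ directly instantiates the forbidden configuration $T\vdash\neg\psi(\overline{m})$ for all $m$ together with $T\vdash\exists x\,\neg\psi(x)$. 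Getting the quantifier-complexity bookkeeping exactly right—ensuring $\neg\psi\in\Pi_{n-1}$ and that the existential witness lands in $\Sigma_n$—is where the care is needed, and I expect that to be the crux of the write-up.
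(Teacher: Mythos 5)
Your induction-on-$n$ skeleton, and the use of the induction hypothesis to secure $\Pi_{n-1}$-completeness at the lower level, match the paper's proof; but the final inference has a genuine gap, and it sits exactly where the full strength of $n$-consistency is supposed to be used. You prove that $T$ cannot prove a \emph{false $\Pi_n$-sentence} and then assert that this, ``combined with the deciding hypothesis, yields that every true $\Pi_n$-sentence is provable.'' It does not. If $\varphi$ is a true $\Pi_n$-sentence with $T\not\vdash\varphi$, the deciding hypothesis gives $T\vdash\neg\varphi$, and $\neg\varphi$ is a false \emph{$\Sigma_n$}-sentence, about which your auxiliary claim says nothing. Your parenthetical ``so I instead argue that $T\vdash\varphi$ directly'' is precisely the missing argument, and nothing in your write-up supplies it: the only hypotheses you actually deploy are ordinary consistency plus the level-$(n-1)$ induction hypothesis, and those are too weak to rule out $T\vdash\neg\varphi$. (Your ``main obstacle'' paragraph misplaces the difficulty: justifying $T\vdash\neg\psi(\overline{k})$ for a false $\forall x\,\psi(x)$ is the easy half, and in fact that entire half is not needed for the lemma.)

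The paper's proof attacks the real case head on. Write the true but unprovable $\varphi\in\Pi_n$ as $\forall z\,\eta(z)$ with $\eta\in\Sigma_{n-1}$. Every instance $\eta(m)$ is true, so by the induction hypothesis ($\Pi_{n-1}$-completeness, hence $\Sigma_{n-1}$-completeness by Remark~\ref{rem-1}) we get $T\vdash\eta(\overline{m})$ for \emph{every} $m$. Meanwhile the deciding hypothesis gives $T\vdash\neg\varphi$, i.e.\ $T\vdash\exists z\,\neg\eta(z)$ with $\neg\eta\in\Pi_{n-1}$. This pair --- a provable $\Sigma_n$-sentence $\exists z\,\neg\eta(z)$ all of whose instances $\neg\eta(\overline{m})$ are refuted --- is exactly the configuration outlawed by $n$-consistency, and that contradiction finishes the induction step. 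You should restructure your argument around ``$T$ refutes no true $\Pi_n$-sentence'' rather than ``$T$ proves no false $\Pi_n$-sentence''; with that change the rest of your outline goes through.
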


\begin{proof}
By induction on $n$. For $n=0$ there is nothing to prove. If the theorem holds for $n$ then we prove it for $n+1$ as follows. If $T$ is $(n+1)$-consistent and $\Pi_{n+1}$-deciding, but not $\Pi_{n+1}$-complete, there must exist some $\psi\!\in\!\Pi_{n+1}\text{-}{\rm Th}(\mathbb{N})$ such that $T\not\vdash\psi$. Write $\psi=\forall z\eta(z)$ for some $\eta\!\in\!\Sigma_n$; then  $\mathbb{N}\models\eta(m)$ for any $m\!\in\!\mathbb{N}$.
By the induction hypothesis, $T$ is $\Pi_n$-complete and so $\Sigma_n$-complete; thus $T\vdash\eta(\overline{m})$ for all $m\!\in\!\mathbb{N}$. On the other hand since $T$ is $\Pi_{n+1}$-deciding and  $T\not\vdash\psi$ we must have $T\vdash\neg\psi$, thus  $T\vdash\exists z\neg\eta(z)$. This contradicts the $(n+1)$-consistency of $T$.
\end{proof}

\begin{corollary}[Generalizing Theorem~2.5(2) of \cite{hajek}]\label{cor-hajek}
If the deductive closure of an $n$-consistent extension of {\rm $\pa$} is $\Pi_n$-definable, then it has an independent $\Pi_{n-1}$-sentence (for any $n\geqslant 2$).
\end{corollary}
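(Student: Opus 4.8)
The plan is to argue by contradiction, exploiting the observation that ``$T$ has an independent $\Pi_{n-1}$-sentence'' is precisely the negation of ``$T$ is $\Pi_{n-1}$-deciding''. So I would fix an $n$-consistent extension $T$ of $\pa$ whose deductive closure is $\Pi_n$-definable, i.e. ${\sf Prov}_T\!\in\!\Pi_n$, with $n\geqslant 2$, and suppose towards a contradiction that $T$ \emph{is} $\Pi_{n-1}$-deciding. The whole content will then come from feeding this assumption into Lemma~\ref{lem-hajek} and clashing the outcome against H\'ajek's boxed implication (1977a).

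The first step is to descend one level in the consistency hierarchy, verifying that $n$-consistency implies $(n-1)$-consistency. Indeed, a witness $\varphi=\exists x\,\psi(x)$ to the failure of $(n-1)$-consistency has $\psi\!\in\!\Pi_{n-2}$; since the arithmetical hierarchy is cumulative, $\Pi_{n-2}\!\subseteq\!\Pi_{n-1}$, so that same $\psi$ already lies in $\Pi_{n-1}$ and $\varphi=\exists x\,\psi(x)\!\in\!\Sigma_n$ is then an admissible witness to the failure of $n$-consistency. Contrapositively, the $n$-consistent theory $T$ is also $(n-1)$-consistent.

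Now I would invoke Lemma~\ref{lem-hajek} at level $n-1$: since $T\supseteq\pa\supseteq\q$ is $(n-1)$-consistent and, by the reductio assumption, $\Pi_{n-1}$-deciding, the lemma yields that $T$ is $\Pi_{n-1}$-complete, that is $\Pi_{n-1}\text{-}{\rm Th}(\mathbb{N})\subseteq T$. On the other hand, because $\pa\subseteq T$, ${\sf Prov}_T\!\in\!\Pi_n$ and $T$ is $n$-consistent, H\'ajek's theorem (1977a) gives $\Pi_{n-1}\text{-}{\rm Th}(\mathbb{N})\not\subseteq T$. These two conclusions contradict each other, so the assumption fails: $T$ cannot be $\Pi_{n-1}$-deciding, which is exactly to say that $T$ has an independent $\Pi_{n-1}$-sentence.

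The argument is short precisely because all the hard work has already been packaged into the two cited facts; the only place that needs genuine care is the level-shift, namely checking that $n$-consistency really does entail $(n-1)$-consistency so that Lemma~\ref{lem-hajek} can legitimately be applied at level $n-1$ while H\'ajek's theorem is applied at level $n$. I expect this bookkeeping with the $\Sigma$/$\Pi$ classes (and confirming $\Pi_{n-1}$-completeness is the same as $\Pi_{n-1}\text{-}{\rm Th}(\mathbb{N})\subseteq T$) to be the main, though minor, obstacle; everything else is a direct substitution into the two black boxes.
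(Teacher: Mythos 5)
Your proof is correct and is essentially identical to the paper's: both assume for contradiction that $T$ is $\Pi_{n-1}$-deciding, apply Lemma~\ref{lem-hajek} at level $n-1$ (using that $n$-consistency implies $(n-1)$-consistency) to get $\Pi_{n-1}\text{-}{\rm Th}(\mathbb{N})\subseteq T$, and contradict H\'ajek's non-inclusion result (Theorem~2.5(1) of \cite{hajek}, the boxed ``1977a'' implication). Your explicit verification of the level-shift from $n$- to $(n-1)$-consistency is a detail the paper leaves implicit, but the argument is the same.
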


\begin{proof}
If for a theory $T$ we have $\pa\subseteq T$ and  ${\sf Prov}_T\!\in\!\Pi_n$ and   $n\text{--}{\sf Con}(T)$ then it cannot be $\Pi_{n-1}$-deciding, since otherwise by Lemma~\ref{lem-hajek} (and $(n-1)$-consistency of $T$),
     $\Pi_{n-1}\text{-}{\rm Th}(\mathbb{N})\subseteq T$; this is in contradiction with  Theorem~2.5(1) of \cite{hajek} which states that $\Pi_{n-1}\text{-}{\rm Th}(\mathbb{N})\not\subseteq T$ under the above assumptions.
\end{proof}

Below we will give yet another generalization of the above corollary (and a result of \cite{hajek}) in Corollary~\ref{cor-hajek2}. H\'ajek \cite{hajek} has also showed  that if the set of axioms of a consistent theory  is $\Pi_1$-definable and that theory contains $\pa$ and all the true $\Pi_1$-sentences, then it is not $\Pi_2$-deciding. In Corollary~\ref{cor-haj2} we will generalize this result by showing that no consistent $\Pi_n$-definable and $\Pi_n$-complete extension of $\q$ is  $\Pi_{n+1}$-deciding.

\medskip
\centerline{
\fbox{$\textrm{{\bf Corollary~\ref{cor-haj2}}}:\qquad  \q\subseteq T  \;\; \& \;\;  {\sf Axioms}_T\!\in\!\Pi_n \;\; \& \;\; {\sf Con}(T) \;\; \& \;\; \Pi_n\text{-}{\rm Th}(\mathbb{N})\subseteq T \;\;\; \Longrightarrow \;\;\; T\not\in\Pi_{n+1}\text{-}\textrm{Deciding}$}}

\subsubsection{Some Recent Attempts}
In the result  of Jeroslow (Theorem~2 of~\cite{jeroslow}) and H\'ajek's generalizations (Theorem~2.5(1) and Theorem~2.8 of~\cite{hajek}) there is no incompleteness; we have only some non-inclusion (of the set of true $\Pi_1$ or $\Pi_n$ sentences in the theory). In the incompleteness theorems of H\'ajek (\cite{hajek} and Corollaries~\ref{cor-hajek} and~\ref{cor-haj2}) we had the, somewhat strong, assumptions of $n$-consistency or $\Pi_n$-completeness (with consistency). It is natural to ask if we can weaken these assumptions (like in Rosser's Trick) to mere consistency;
and some attempts  \cite{kitada09,ishii03} have been made in this direction. Let us note that Rosserian (also G\"odelean) proofs make sense for definable theories only (for example the undefinable theory ${\rm Th}(\mathbb{N})$ is complete) for the reason that when a theory $T$ is definable one can construct its provability predicate ${\sf Prov}_T$, and once one has a provability predicate for $T$ then it becomes a definable theory.

 We note that the proofs of G\"odel-Rosser's incompleteness theorem for non-{\sc re} theories  given in \cite{kitada09,ishii03} are both wrong; for the falsity of the argument of \cite{kitada09} one can see \cite{salehi-review1}; cf. also  \cite{kitada11} and \cite{salehi-review2}. The falsity of the proof of \cite{ishii03} is shown in the following remark (\ref{rem-2}). Unfortunately, there is no hope of extending G\"odel-Rosser's incompleteness theorem to definable theories, even to $\Pi_1$-definable ones; our Corollary~\ref{cor-rosser} below shows that even a (consistent and) $\Pi_1$-definable theory (extending $\q$) can be complete. This clashes all the hopes for a general incompleteness  phenomenon in the class  of definable, and consistent, theories.

\begin{remark}
\label{rem-2}{\rm
Unfortunately, the proof of G\"odel-Rosser's incompleteness theorem for non-{\sc re} theories  given in \cite{ishii03} is wrong: In the proof of Lemma~3 in \cite{ishii03} the author uses the  Diagonal Lemma for  $\neg F(x)$, where $F$ is constructed in Lemma~S (Chapter~VI) of \cite{smullyan} (together with its Lemma~2 in Chapter~V); it can be seen that $F\!\in\!\Pi_1$ and so $A\!\in\!\Sigma_1$. If, as claimed in Lemma~3 (and Theorem and Corollary) of \cite{ishii03}, for any $\Pi_1$-definable consistent extension of $\q$ there existed a $\Sigma_1$-sentence $A$ independent from it, then the theory  $\q+\Pi_1$-${\rm Th}(\mathbb{N})$ would have had a $\Pi_1$-sentence independent from it. But it is well-known that this theory is $\Sigma_1$-complete and $\Pi_1$-complete. So, the proof of the main theorem of \cite{ishii03} is flawed.  In fact, the mistaken step is in the proof of Lemma~2 where the author claims that ``$m_1$ can be chosen such that $m_2\leqslant m_1$ and hence $\overline{R}(k,m_2,{\rm Neg}(n))$.'' But if we choose $m_1$ arbitrarily large then the condition $\forall x\!\leqslant\!m_1\overline{R}\big(k,x,{\rm Neg}(n)\big)$ may not necessarily hold anymore. Indeed, Theorem~\ref{thm-rosser} for $n\!=\!0$ is the negation of what is claimed in the Abstract of   \cite{ishii03}.
}\hfill\ding{71}\end{remark}

%
%

\section{G\"odel's Theorem Generalized}\label{sec-godel}

\subsection{Semantic Form of G\"odel's Theorem}
G\"odel's First Incompleteness Theorem in its (weaker) semantic form states that no sound and {\sc re} extension of $\q$ can be $\Pi_1$-complete. Noting that a set is {\sc re} if and only if it is $\Sigma_1$-definable, this theorem can be depicted as:

\medskip
\centerline{
\fbox{$\textrm{G\"odel's 1}^{\rm st} \; \textrm{\footnotesize (Semantic)}:\qquad  \q\subseteq T  \;\; \& \;\;  {\sf Axioms}_T\!\in\!\Sigma_1 \;\; \& \;\; \mathbb{N}\models T \;\;\; \Longrightarrow \;\;\; \Pi_1\text{-}{\rm Th}(\mathbb{N})\not\subseteq T$}}
\medskip

\noindent A natural generalization of this theorem is the following (cf. Chapter~III of \cite{smullyan}, or  Corollary~1 of \cite{sereny04}):
\begin{theorem}\label{prop-godel}
No sound and $\Sigma_n$-definable  ($n\!>\!0$) extension of {\rm $\q$} can be $\Pi_n$-complete.
\end{theorem}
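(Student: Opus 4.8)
The plan is to run the classical semantic Gödel argument, with the only real care going into arithmetical complexity. First I would record the provability predicate's complexity: since ${\sf Axioms}_T\in\Sigma_n$ with $n>0$, we may read this as ${\sf Axioms}_T\in\Sigma_{(n-1)+1}$, so by the closure fact noted in the preliminaries ${\sf ConjAx}_T\in\Sigma_n$ and hence ${\sf Prov}_T\in\Sigma_n$; consequently $\neg{\sf Prov}_T(x)$ is a $\Pi_n$-formula. Throughout I would also use that ${\sf Prov}_T$ is a correct arithmetization in the standard model, i.e. $\mathbb{N}\models{\sf Prov}_T(\ulcorner\varphi\urcorner)$ iff $T\vdash\varphi$, which follows from ${\sf Axioms}_T$ correctly defining the axioms of $T$ and from ${\sf Proof}$ being the standard proof predicate.

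Next I would apply the diagonal lemma to the $\Pi_n$-formula $\neg{\sf Prov}_T(x)$ to obtain a Gödel sentence $G$ with $\q\vdash G\leftrightarrow\neg{\sf Prov}_T(\ulcorner G\urcorner)$. The crucial, and I expect the only delicate, point is to secure that $G$ may be taken to lie in $\Pi_n$ itself rather than merely be provably equivalent to a $\Pi_n$-sentence, since the statement to be refuted quantifies over genuine $\Pi_n$-sentences. This is the class-respecting form of the diagonal lemma: the self-substitution function has a $\Delta_1$ graph, so writing the diagonal formula as $\forall z\,\big({\sf Sub}(x,z)\to\neg{\sf Prov}_T(z)\big)$ with ${\sf Sub}$ taken in its $\Sigma_1$ form keeps the whole formula in $\Pi_n$ for $n\geq 1$ (because $\neg{\sf Sub}\in\Pi_1\subseteq\Pi_n$, while $\Pi_n$ is closed under disjunction and universal quantification), and substituting a numeral preserves the class. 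The provable equivalence over $\q$ then follows from the $\Sigma_1$-completeness of $\q$ applied to the true functional instance of ${\sf Sub}$.

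With $G\in\Pi_n$ in hand, I would show $T\not\vdash G$. Suppose otherwise; then an actual proof exists, so $\mathbb{N}\models{\sf Prov}_T(\ulcorner G\urcorner)$. On the other hand $T\supseteq\q$ gives $T\vdash G\leftrightarrow\neg{\sf Prov}_T(\ulcorner G\urcorner)$, whence $T\vdash\neg{\sf Prov}_T(\ulcorner G\urcorner)$; as this is a $\Pi_n$-sentence and $T$ is sound, $\mathbb{N}\models\neg{\sf Prov}_T(\ulcorner G\urcorner)$, contradicting the previous line. Hence $T\not\vdash G$.

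Finally I would verify that $G$ is true. Since $T\not\vdash G$, correctness gives $\mathbb{N}\models\neg{\sf Prov}_T(\ulcorner G\urcorner)$; and because $\mathbb{N}\models\q$ together with the fixed-point equivalence yields $\mathbb{N}\models G\leftrightarrow\neg{\sf Prov}_T(\ulcorner G\urcorner)$, we conclude $\mathbb{N}\models G$. Thus $G$ is a true $\Pi_n$-sentence that $T$ does not prove, so $T$ is not $\Pi_n$-complete. The main obstacle is entirely the complexity bookkeeping in the diagonal lemma; the remainder is the standard semantic Gödel argument and uses only that $T$ is sound and that ${\sf Prov}_T$ correctly defines provability in $\mathbb{N}$.
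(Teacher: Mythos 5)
Your proposal is correct and follows essentially the same route as the paper: diagonalize $\neg{\sf Prov}_T(x)$, note that the resulting fixed point is a $\Pi_n$-sentence, and use soundness of $T$ to conclude it is true but unprovable. The only differences are cosmetic --- you establish unprovability before truth (the paper does the reverse) and you spell out the class-respecting form of the diagonal lemma, a point the paper dismisses with ``Obviously, $\boldsymbol\gamma\in\Pi_n$.''
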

\centerline{
\fbox{$\textrm{{\bf Theorem~\ref{prop-godel}}}:\qquad  \q\subseteq T  \;\; \& \;\;  {\sf Axioms}_T\!\in\!\Sigma_n \;\; \& \;\; \mathbb{N}\models T \;\;\; \Longrightarrow \;\;\; \Pi_n\text{-}{\rm Th}(\mathbb{N})\not\subseteq T$}}
\medskip
\begin{proof}
Suppose $T$ is a sound extension of $\q$ such that ${\sf Axioms}_T\in\Sigma_n$. By Diagonal Lemma (see e.g.~\cite{hajekpudlak,smith}) there exists a sentence $\boldsymbol\gamma$ such that ${\tt Q}\vdash \boldsymbol\gamma\longleftrightarrow\neg{\sf Prov}_T(\ulcorner\boldsymbol\gamma\urcorner)$.
Obviously, $\boldsymbol\gamma\in\Pi_n$. We show that $(\dag)\; \mathbb{N}\models\boldsymbol\gamma$.

Since, otherwise (if $\mathbb{N}\models\neg\boldsymbol\gamma$ then) there must exist some  $k,m\in\mathbb{N}$ such that $\mathbb{N}\models{\sf ConjAx}_T(k)$ and $\mathbb{N}\models{\sf Proof}(m,k\!\rightarrow\!\ulcorner\boldsymbol\gamma\urcorner)$. Whence, $T\vdash\boldsymbol\gamma$ which contradicts the soundness of $T$. So, 
$\mathbb{N}\models\boldsymbol\gamma$. Now, we show that $T\not\vdash\boldsymbol\gamma$.  For the sake of contradiction, assume $T\vdash\boldsymbol\gamma$. Then, by the  compactness theorem, there are some $\varphi_1,\cdots,\varphi_l$ such that $\mathbb{N}\models\bigwedge\!\!\!\!\!\bigwedge_{i=1}^l{\sf Axioms}_T(\ulcorner\varphi_i\urcorner)$ and $\vdash\bigwedge\!\!\!\!\!\bigwedge_{i=1}^l
\varphi_i\rightarrow\boldsymbol\gamma$. If $m$ is the code of this proof and $k$ is the code of $\bigwedge\!\!\!\!\!\bigwedge_{i=1}^l\varphi_i$ then $\mathbb{N}\models{\sf ConjAx}_T(k)\wedge{\sf Proof}
(m,k\!\rightarrow\!\ulcorner\boldsymbol\gamma\urcorner)$, or in other words $\mathbb{N}\models{\sf Prov}_T(\ulcorner\boldsymbol\gamma\urcorner)$ so  $\mathbb{N}\models\neg\boldsymbol\gamma$ contradicting $(\dag)$ above. Thus, $\boldsymbol\gamma\in\Pi_n\text{-}{\rm Th}(\mathbb{N})\setminus T$.
\end{proof}

\begin{corollary}\label{cor-godel2}
No sound and $\Pi_n$-definable extension of {\rm $\q$} can be $\Pi_{n+1}$-complete.
\end{corollary}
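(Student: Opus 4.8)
The plan is to reduce the statement directly to Theorem~\ref{prop-godel} by observing that a $\Pi_n$-definable theory is, in particular, $\Sigma_{n+1}$-definable. First I would recall from the syntactic conventions above that the class $\Sigma_{n+1}$ contains the closure of $\Pi_n$ under existential quantification; allowing zero such quantifiers gives the inclusion $\Pi_n\subseteq\Sigma_{n+1}$. Hence if $T$ is a $\Pi_n$-definable extension of $\q$, so that ${\sf Axioms}_T\in\Pi_n$, then automatically ${\sf Axioms}_T\in\Sigma_{n+1}$, i.e.\ $T$ is $\Sigma_{n+1}$-definable.

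Next I would simply invoke Theorem~\ref{prop-godel} with the index $n+1$ in place of $n$: no sound and $\Sigma_{n+1}$-definable extension of $\q$ can be $\Pi_{n+1}$-complete. Since the theory $T$ under consideration is sound, extends $\q$, and has just been seen to be $\Sigma_{n+1}$-definable, the theorem applies verbatim and yields that $T$ is not $\Pi_{n+1}$-complete. Equivalently, $\Pi_{n+1}\text{-}{\rm Th}(\mathbb{N})\not\subseteq T$, which is the desired conclusion.

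There is essentially no genuine obstacle here, and the only point requiring care is the bookkeeping of quantifier complexity: one must be sure that passing from $\Pi_n$ to $\Sigma_{n+1}$ is the correct inclusion (rather than attempting to remain at the same level), and that the side condition $n\!>\!0$ demanded by Theorem~\ref{prop-godel} is satisfied after the shift. The latter holds because the shifted index is $n+1\geqslant 1>0$. The soundness of $T$ and its containment of $\q$ are inherited unchanged, since these are properties of the theory itself and not of the particular definition chosen for its axiom set; thus the whole argument is a one-line application of the preceding theorem once the syntactic inclusion is noted.
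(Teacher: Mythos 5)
Your proposal is correct and is exactly the paper's argument: the paper's proof of this corollary is the one-line observation that any $\Pi_n$-definable theory is also $\Sigma_{n+1}$-definable, after which Theorem~\ref{prop-godel} applies with index $n+1$ (and $n+1>0$ holds automatically). Nothing further is needed.
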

\centerline{
\fbox{$\textrm{{\bf Corollary~\ref{cor-godel2}}}:\qquad  \q\subseteq T  \;\; \& \;\;  {\sf Axioms}_T\!\in\!\Pi_n \;\; \& \;\; \mathbb{N}\models T \;\;\; \Longrightarrow \;\;\; \Pi_{n+1}\text{-}{\rm Th}(\mathbb{N})\not\subseteq T$}}
\medskip
\begin{proof}
It suffices to note that any $\Pi_n$-definable is also $\Sigma_{n+1}$-definable.
\end{proof}

\begin{remark}\label{rem-1}{\rm
It is well known that $\q$ is $\Sigma_1$-complete (see e.g. \cite{hajekpudlak,kaye,smith}) but not $\Pi_1$-complete (by G\"odel's first incompleteness theorem, see  e.g. \cite{hajekpudlak,kaye,smith}). So, $\Sigma_1$-completeness does not imply $\Pi_1$-completeness, and in general, $\Sigma_n$-completeness does not imply $\Pi_n$-completeness, since for example the $\Sigma_n$-complete and sound theory  $\q+\Sigma_n$-${\rm Th}(\mathbb{N})$ is not $\Pi_n$-complete by Theorem~\ref{prop-godel}. On the other hand, $\Pi_n$-completeness (of any theory $T$) implies (its) $\Sigma_n$-completeness, even (its) $\Sigma_{n+1}$-completeness: for any true $\Sigma_{n+1}$-sentence  $\exists x_1,\ldots,x_k\theta(x_1,\ldots,x_k)$ with $\theta\in\Pi_n$   there are $n_1,\ldots,n_k\in\mathbb{N}$ such that $\mathbb{N}\models\theta(n_1,\ldots,n_k)$, and so by $\Pi_n$-completeness of $T$ we have $T\vdash\theta(\overline{n_1},\ldots,\overline{n_k})$ whence $T\vdash\exists x_1,\ldots,x_k\theta(x_1,\ldots,x_k)$. In symbols: \fbox{$\Pi_n\text{-}{\rm Th}(\mathbb{N})\subseteq T \;\Longrightarrow\; \Sigma_{n+1}\text{-}{\rm Th}(\mathbb{N})\subseteq T$} (cf.~\cite[Lemma~2.2]{hajek}).
}\hfill\ding{71}\end{remark}

\subsection{General Form of G\"odel's Theorem}
The original form of G\"odel's first incompleteness theorem states that a recursively enumerable extension of $\q$ which is $\omega$-consistent cannot be $\Pi_1$-deciding. This syntactic notion was introduced to take place of the semantic notion of soundness. Later it was found out that G\"odel's proof works with the weaker assumption of 1-consistency which is equivalent to the consistency (of the theory) with $\Pi_1$-${\rm Th}(\mathbb{N})$ (see \cite{isaacson}):

\medskip
\centerline{
\fbox{$\textrm{G\"odel's 1}^{\rm st}\; (1931):\qquad  \q\subseteq T  \;\; \& \;\;  {\sf Axioms}_T\!\in\!\Sigma_{1} \;\; \& \;\; {\sf Con}\big(T+\Pi_1$-${\rm Th}(\mathbb{N})\big) \;\;\; \Longrightarrow \;\;\; T\not\in\Pi_1\text{-}\textrm{Deciding}$}}
\medskip

\noindent
A natural generalization of the theorem in this form is the $\Pi_n$-undecidability  of any $\Sigma_n$-definable extension of $\q$ which is consistent with $\Pi_n$-${\rm Th}(\mathbb{N})$; proved in Corollary~\ref{cor-1st} of the following theorem.

\begin{theorem}\label{thm-1st}
No $\Pi_n$-definable extension of  {\rm $\q$}  can be $\Pi_{n+1}$-deciding  if it is consistent with $\Pi_n$-${\rm Th}(\mathbb{N})$.
\end{theorem}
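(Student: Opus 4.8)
The plan is to avoid the plain G\"odel sentence and instead run Rosser's construction against a $\Pi_n$ proof predicate. The reason is a complexity mismatch: the negation of a G\"odel sentence for $T$ is, over $\q$, the formula ${\sf Prov}_T(\ulcorner\cdot\urcorner)\!\in\!\Sigma_{n+1}$, and ruling out its $T$-provability would require $\Sigma_{n+1}$-soundness, which we do not have. Since ${\sf Axioms}_T\!\in\!\Pi_n$ gives ${\sf ConjAx}_T\!\in\!\Pi_n$, the predicate ${\sf Prf}_T(w,x)$ saying ``$w$ codes a pair $\langle m,k\rangle$ with ${\sf ConjAx}_T(k)\wedge{\sf Proof}(m,k\!\rightarrow\!x)$'' lies in $\Pi_n$. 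By the Diagonal Lemma I would fix a sentence $\rho$ with $\q\vdash\rho\leftrightarrow\forall w\big[{\sf Prf}_T(w,\ulcorner\rho\urcorner)\rightarrow\exists v\!\leqslant\!w\,{\sf Prf}_T(v,\ulcorner\neg\rho\urcorner)\big]$; since bounded quantifiers do not raise the level and $\Sigma_n,\Pi_n\subseteq\Pi_{n+1}$, a routine count gives $\rho\!\in\!\Pi_{n+1}$. The whole argument is then carried out inside $T'=T+\Pi_n\text{-}{\rm Th}(\mathbb{N})$, which is consistent by hypothesis and, containing all true $\Pi_n$-sentences, is $\Pi_n$-complete and hence (Remark~\ref{rem-1}) $\Sigma_n$-complete; note $T\subseteq T'$, so $T$ is consistent and every genuine $T$-proof has a standard code.

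First I would show $T\not\vdash\rho$. Assuming $T\vdash\rho$, an actual proof yields a standard $w_0$ with $\mathbb{N}\models{\sf Prf}_T(\overline{w_0},\ulcorner\rho\urcorner)$, so by $\Pi_n$-completeness $T'\vdash{\sf Prf}_T(\overline{w_0},\ulcorner\rho\urcorner)$; instantiating the ($T'$-provable) Rosser equivalence at $w_0$ gives $T'\vdash\exists v\!\leqslant\!\overline{w_0}\,{\sf Prf}_T(v,\ulcorner\neg\rho\urcorner)$. On the other hand $T$ is consistent and $T\vdash\rho$, so $T\not\vdash\neg\rho$; hence $\mathbb{N}\models\forall v\!\leqslant\!\overline{w_0}\,\neg{\sf Prf}_T(v,\ulcorner\neg\rho\urcorner)$, a true $\Sigma_n$-sentence, which $\Sigma_n$-completeness makes $T'$-provable. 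These two facts contradict the consistency of $T'$, so $T\not\vdash\rho$.

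Symmetrically I would show $T\not\vdash\neg\rho$. Assuming $T\vdash\neg\rho$, I fix a standard code $w_1$ of the proof, so $T'\vdash{\sf Prf}_T(\overline{w_1},\ulcorner\neg\rho\urcorner)$ by $\Pi_n$-completeness, while $T'\vdash\neg\rho$ unfolds to $T'\vdash\exists w\big[{\sf Prf}_T(w,\ulcorner\rho\urcorner)\wedge\forall v\!\leqslant\!w\,\neg{\sf Prf}_T(v,\ulcorner\neg\rho\urcorner)\big]$. Consistency of $T$ gives $T\not\vdash\rho$, so $\mathbb{N}\models\forall w\!\leqslant\!\overline{w_1}\,\neg{\sf Prf}_T(w,\ulcorner\rho\urcorner)$, a true $\Sigma_n$-sentence provable in $T'$; together with the standard $\q$-fact $\forall x(x\!\leqslant\!\overline{w_1}\vee\overline{w_1}\!\leqslant\!x)$ this forces any witness $w$ above to satisfy $\overline{w_1}\!\leqslant\!w$, whence the witnessing clause yields $\neg{\sf Prf}_T(\overline{w_1},\ulcorner\neg\rho\urcorner)$, contradicting $T'\vdash{\sf Prf}_T(\overline{w_1},\ulcorner\neg\rho\urcorner)$. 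Thus $\rho$ is a $\Pi_{n+1}$-sentence that $T$ neither proves nor refutes, i.e. $T\notin\Pi_{n+1}$-Deciding.

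The delicate point, and the reason $\Sigma_n$-soundness (rather than the unavailable $\Sigma_{n+1}$-soundness) is exactly enough, is the bookkeeping that keeps every sentence fed to the completeness of $T'$ within $\Sigma_n$ or $\Pi_n$: the comparison clauses are bounded, so $\neg{\sf Prf}_T$ stays $\Sigma_n$ and ${\sf Prf}_T$ at a fixed numeral stays $\Pi_n$, and only these---never the full $\Sigma_{n+1}$ provability statement---are ever asserted in $T'$. I expect this complexity accounting, and the choice of $T'$ as the sound reference theory in which the two bounded witnesses collide, to be the crux; for $n=0$ one has ${\sf Prf}_T\!\in\!\Sigma_0$ and $T'=T$ (as $\q$ is $\Sigma_0$-complete), so the whole argument collapses to the classical Rosser proof.
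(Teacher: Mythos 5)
Your proof is correct and follows essentially the same route as the paper's: a level-$n$ Rosser sentence whose independence is adjudicated inside $T+\Pi_n\text{-}{\rm Th}(\mathbb{N})$ via the $\Pi_n$- and $\Sigma_n$-completeness of that (consistent) extension. The only, harmless, differences are that you diagonalize on provability in $T$ alone rather than in $T+\Pi_n\text{-}{\rm Th}(\mathbb{N})$ --- so you obtain independence from $T$ only, which is all the theorem asserts, whereas the paper's $\boldsymbol\gamma$ comes out independent even from the augmented theory (a fact it reuses later, e.g.\ in Remark~\ref{rmk-compare}) --- and you settle the $T\not\vdash\neg\rho$ half with the classical order dichotomy $\q\vdash\forall x\,(x\leqslant\overline{w_1}\vee\overline{w_1}\leqslant x)$ where the paper runs its three-step bounded case analysis.
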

\centerline{
\fbox{$\textrm{{\bf Theorem~\ref{thm-1st}}}:\qquad  \q\subseteq T  \;\; \& \;\;  {\sf Axioms}_T\!\in\!\Pi_{n} \;\; \& \;\; {\sf Con}\big(T+\Pi_n\textrm{-}{\rm Th}(\mathbb{N})\big) \;\;\; \Longrightarrow \;\;\; T\not\in\Pi_{n+1}\text{-}\textrm{Deciding}$}}
\medskip
\begin{proof}
By  Diagonal Lemma there exists a sentence $\boldsymbol\gamma$ such that

 \noindent $\q\vdash\boldsymbol\gamma\longleftrightarrow\forall u,z\Big(\exists x,y\!\leqslant\!u\big[\langle x,y\rangle=u\wedge\Pi_n\text{-}{\sf True}(x)\wedge{\sf ConjAx}_T(y)\wedge{\sf Proof}(z,x\!\wedge\!y\!\rightarrow\!\ulcorner \boldsymbol\gamma\urcorner)\big]\rightarrow$

 $\exists u'\!\leqslant\!u\exists z'\!\leqslant\!z\big(\exists x',y'\!\leqslant\!u'\big[\langle x',y'\rangle=u'\wedge\Pi_n$-${\sf True}(x')\wedge{\sf ConjAx}_T(y')\wedge{\sf Proof}(z',x'\!\wedge\!y'\!\rightarrow\!\ulcorner \neg\boldsymbol\gamma\urcorner)\big]\big)\Big)$\hfill $(\star)$

where, $\langle-,-\rangle$ is an injective pairing (such as $\langle u,v\rangle=(u+v)^2+u$).

\noindent Obviously, $\boldsymbol\gamma\in\Pi_{n+1}$. We show that $\boldsymbol\gamma$ is independent from $T^\ast=T+\Pi_n$-${\rm Th}(\mathbb{N})$.

\noindent Put $\Psi(u,z)=\exists x,y\!\leqslant\!u\big[\langle x,y\rangle=u\wedge\Pi_n\text{-}{\sf True}(x)\wedge{\sf ConjAx}_T(y)\wedge{\sf Proof}(z,x\!\wedge\!y\!\rightarrow\!\ulcorner \boldsymbol\gamma\urcorner)\big]$ and

$\;\,\widehat{\Psi}(u,z)=\exists x,y\!\leqslant\!u\big[\langle x,y\rangle=u\wedge\Pi_n\text{-}{\sf True}(x)\wedge{\sf ConjAx}_T(y)\wedge{\sf Proof}(z,x\!\wedge\!y\!\rightarrow\!\ulcorner
\neg\boldsymbol\gamma\urcorner)\big]$.

\noindent Thus, $(\star)$  is now translated to

\qquad\qquad\qquad  {$\q\vdash\boldsymbol\gamma\longleftrightarrow\forall u,z\big[\Psi(u,z)\rightarrow\exists u'\!\leqslant\!u\exists z'\!\leqslant\!z\widehat{\Psi}(u',z')\big]$.}\hfill $(\star ')$

\begin{itemize}\itemindent=3.5em
\item[$(T^\ast\not\vdash\boldsymbol\gamma)$:\,\;\,] If $T^\ast\vdash\boldsymbol\gamma$ then there are $\psi\!\in\!\Pi_n\text{-}{\rm Th}(\mathbb{N})$ (note that $\Pi_n\text{-}{\rm Th}(\mathbb{N})$ is closed under conjunction) and a conjunction $\varphi$ of the axioms of $T$  such that $\vdash\psi\wedge\varphi\rightarrow\boldsymbol\gamma$. Let $m$ be the G\"odel code of this proof and let $k=\langle\ulcorner\psi\urcorner,\ulcorner\varphi\urcorner
    \rangle$. Now, we have $\mathbb{N}\models\Psi(k,m)$, and so by the $\Pi_n$-completeness of $T^\ast$ we have  $T^\ast\vdash\Psi(\overline{k},\overline{m})$, thus by $(\star ')$, $T^\ast\vdash\exists u'\!\leqslant\!\overline{k}\exists z'\!\leqslant\!\overline{m}\widehat{\Psi}(u',z')$.\qquad $(\ddag)$

 On the other hand by the consistency of $T^\ast$ we have    $T^\ast\not\vdash\neg\boldsymbol\gamma$. So, for any  $q=\langle q_1,q_2\rangle,r\in\mathbb{N}$ we have that if $\mathbb{N}\models\Pi_n\text{-}{\sf True}(q_1)\wedge{\sf ConjAx}_T(q_2)$ then $\mathbb{N}\models\neg{\sf Proof}(r,q_1\!\wedge\!q_2\!\rightarrow\!\ulcorner
 \neg\boldsymbol\gamma\urcorner)$. Whence,   $\mathbb{N}\models\neg\widehat{\Psi}(q,r)$ holds for all $q,r\in\mathbb{N}$  in particular for all $q\!\leqslant\!k, r\!\leqslant\!m$; thus
${\mathbb{N}\models\forall u'\!\leqslant\!k\forall z'\!\leqslant\!m\neg\widehat{\Psi}(u',z').}$
 Now, $\forall u'\!\leqslant\!k\forall z'\!\leqslant\!m\neg\widehat{\Psi}(u',z')$ is a true $\Sigma_n$-sentence and $T^\ast$ is a $\Pi_n$-complete theory; so  by Remark~\ref{rem-1},   $T^\ast\vdash\forall u'\!\leqslant\!\overline{k}\forall z'\!\leqslant\!\overline{m}\neg\widehat{\Psi}(u',z')$  contradicting $(\ddag)$ above!

\item[$(T^\ast\not\vdash\neg\boldsymbol\gamma)$:] If $T^\ast\vdash\neg\boldsymbol\gamma$ then from $(\star ')$ it follows that
    \newline\centerline{$(i)\qquad  T^\ast\vdash\exists u,z\big[\Psi(u,z)\wedge\forall u'\!\leqslant\!u\forall z'\!\leqslant\!z\neg\widehat{\Psi}(u',z')\big].$}
     By   the compactness theorem (applied to the deduction $T^\ast\vdash\neg\boldsymbol\gamma$) there are $k=\langle k_1,k_2\rangle,m\!\in\!\mathbb{N}$ such that $\mathbb{N}\models\Pi_n\text{-}{\sf True}(k_1)\wedge{\sf ConjAx}_T(k_2)\wedge{\sf Proof}(m,k_1\!\wedge\!k_2\!\rightarrow\!\ulcorner
     \neg\boldsymbol\gamma\urcorner)$.
     Below,  we will show that
      \newline\centerline{$(ii)\qquad T^\ast\vdash\forall u,z \big[\neg\Psi(u,z)\vee\exists  u'\!\leqslant\!u\exists z'\!\leqslant\!z\widehat{\Psi}(u',z')\big],$} which contradicts $(i)$ above. The proof of $(ii)$ will be  in three steps:

    \qquad\qquad   (1)\; $T^\ast\vdash \forall u\!\geqslant\!\overline{k}\forall z\!\geqslant\!\overline{m}\big[\exists  u'\!\leqslant\!u\exists z'\!\leqslant\!z\widehat{\Psi}(u',z')\big]$

      \qquad\qquad   (2)\; $T^\ast\vdash \forall u\!<\!\overline{k}\forall z\big[\neg\Psi(u,z)\big]$

      \qquad\qquad   (3)\; $T^\ast\vdash \forall u\forall z\!<\!\overline{m}\big[\neg\Psi(u,z)\big]$
 \begin{itemize}
 \item[(1)]
          Since  $\widehat{\Psi}(\overline{k},\overline{m})=\exists x,y\!\leqslant\!\overline{k}\big[\langle x,y\rangle=\overline{k}\wedge\Pi_n\text{-}{\sf True}(x)\wedge{\sf ConjAx}_T(y)\wedge{\sf Proof}(\overline{m},x\!\wedge\!y\!\rightarrow\!\ulcorner
          \neg\boldsymbol\gamma\urcorner)
          \big]$  is a true $\Pi_n$-sentence (for $x=k_1,y=k_2$), then $T^\ast$ proves it, so
(1) holds (for $u'=\overline{k},z'=\overline{m}$).
\item[(2)] It suffices to show $T^\ast\vdash\forall z\neg\Psi(\overline{i},z)$ for all $i<k$. Fix an $i<k$. If there are no $i_1,i_2$ such that $\langle i_1,i_2\rangle=i$ then $T^\ast\vdash\forall z\neg\Psi(\overline{i},z)$ holds trivially; otherwise, fix $i_1,i_2$ with $\langle i_1,i_2\rangle=i$. If either $\neg\Pi_n\text{-}{\sf True}(i_1)$ or $\neg{\sf ConjAx}_T(i_2)$, then again $T^\ast\vdash\forall z\neg\Psi(\overline{i},z)$ holds. Finally, assume that $\Pi_n\text{-}{\sf True}(i_1)\wedge{\sf ConjAx}_T(i_2)$ is true. Then, by the consistency of $T^\ast$ we have $T^\ast\not\vdash\boldsymbol\gamma$ and so for all $p\in\mathbb{N}$ we have $\mathbb{N}\models\neg{\sf Proof}(p,i_1\!\wedge\!i_2\!\rightarrow\!\ulcorner
    \boldsymbol\gamma\urcorner)$. Whence, $T^\ast$ proves the true $\Pi_1$-sentence $\forall z\neg{\sf Proof}(z,i_1\!\wedge\!i_2\!\rightarrow\!\ulcorner
    \boldsymbol\gamma\urcorner)$, and so $T^\ast\vdash\forall z\neg\Psi(\overline{i},z)$.
\item[(3)] Again we need to show $T^\ast\vdash \forall u\big[\neg\Psi(u,\overline{j})\big]$ for all $j<m$. Since $T^\ast$ proves the true $\Pi_1$-sentence $\forall x,y,v,w\big[{\sf Proof}(w,x\!\wedge\!y\!\rightarrow\!v)\rightarrow\langle x,y\rangle<w\big]$ then $T^\ast\vdash\forall u\big[\Psi(u,\overline{j})\rightarrow u<\overline{j}\big]$. Since, by an argument similar to that of (2) above, we can show that $T^\ast\vdash\forall u<\overline{j}\big[\neg\Psi(u,\overline{j})\big]$, then $T^\ast\vdash \forall u\forall z\!<\!\overline{m}\big[\neg\Psi(u,z)\big]$ holds too.
\end{itemize}
\end{itemize}
Whence, $T^\ast$, and so $T$, is not $\Pi_{n+1}$-deciding. Let us note that the above proof also shows that $\mathbb{N}\models\boldsymbol\gamma$.
\end{proof}

\noindent Note that Theorem~\ref{thm-1st} is Rosser's Theorem  for $n=0$, and indeed one can feel that the above, rather long, proof is in spirit more Rosserian (than G\"odelean) in the sense that the proof uses somehow Rosser's Trick.

\begin{corollary}\label{cor-haj2}
No consistent $\Pi_n$-definable and $\Pi_n$-complete extension of {\rm $\q$} can be
$\Pi_{n+1}$-deciding.
\end{corollary}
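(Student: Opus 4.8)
The plan is to obtain this as an immediate consequence of Theorem~\ref{thm-1st}, by verifying that the three hypotheses of that theorem are met under the assumptions of the corollary. Two of them, namely $\q\subseteq T$ and ${\sf Axioms}_T\in\Pi_n$, are assumed outright. The only genuine work is to replace the combined assumption ``${\sf Con}(T)$ together with $\Pi_n\text{-}{\rm Th}(\mathbb{N})\subseteq T$'' by the single hypothesis ``${\sf Con}\big(T+\Pi_n\text{-}{\rm Th}(\mathbb{N})\big)$'' that Theorem~\ref{thm-1st} actually requires.

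First I would observe that $\Pi_n$-completeness of $T$ is, by definition, exactly the inclusion $\Pi_n\text{-}{\rm Th}(\mathbb{N})\subseteq T$: every true $\Pi_n$-sentence is already a theorem of $T$. Consequently, adjoining the true $\Pi_n$-sentences to $T$ as further axioms does not enlarge its deductive closure, so $T+\Pi_n\text{-}{\rm Th}(\mathbb{N})$ proves precisely the same sentences as $T$. Since $T$ is consistent by hypothesis, so is $T+\Pi_n\text{-}{\rm Th}(\mathbb{N})$. Equivalently, as recorded in the preliminaries, any consistent theory containing $\Pi_n\text{-}{\rm Th}(\mathbb{N})$ is $\Sigma_n$-sound, and for extensions of $\q$ the $\Sigma_n$-soundness of $T$ is equivalent to ${\sf Con}\big(T+\Pi_n\text{-}{\rm Th}(\mathbb{N})\big)$; either route delivers the required consistency.

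Having established ${\sf Con}\big(T+\Pi_n\text{-}{\rm Th}(\mathbb{N})\big)$, I would then invoke Theorem~\ref{thm-1st} directly to conclude that $T$ is not $\Pi_{n+1}$-deciding, which is the assertion of the corollary. I do not expect any serious obstacle here: the whole content lies in recognising that, over $\q$, ``consistent and $\Pi_n$-complete'' is merely a restatement of ``consistent with $\Pi_n\text{-}{\rm Th}(\mathbb{N})$'', after which the Rosser-style diagonalisation already carried out in the proof of Theorem~\ref{thm-1st} supplies the undecided $\Pi_{n+1}$-sentence.
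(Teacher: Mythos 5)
Your proposal is correct and follows essentially the same route as the paper: both reduce the corollary to Theorem~\ref{thm-1st} by noting that $\Pi_n$-completeness means $\Pi_n\text{-}{\rm Th}(\mathbb{N})\subseteq T$, so that $T+\Pi_n\text{-}{\rm Th}(\mathbb{N})$ has the same theorems as $T$ and is therefore consistent. Your write-up merely spells out this step in slightly more detail than the paper does.
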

\begin{proof}
If $T\supseteq\q+\Pi_n\text{-}{\rm Th}(\mathbb{N})$ is consistent and $\Pi_n$-definable, then $T+\Pi_n\text{-}{\rm Th}(\mathbb{N})$ is consistent, and so by Theorem~\ref{thm-1st}, $T$ is not $\Pi_{n+1}$-deciding.
\end{proof}

\begin{corollary}\label{cor-hajek2}
No $\Pi_n$-definable extension of  {\rm $\q$}  can be $\Pi_{n+1}$-deciding  if it is $n$-consistent.
\end{corollary}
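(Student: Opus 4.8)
The plan is to argue by contradiction, but \emph{not} by trying to feed the hypotheses into Theorem~\ref{thm-1st} directly; instead I would reduce to the already-proved Corollary~\ref{cor-haj2} through Lemma~\ref{lem-hajek}. The reason for this detour is the real crux of the problem. The soundness hypothesis of Theorem~\ref{thm-1st} is, for a consistent extension of $\q$, equivalent to the consistency of $T+\Pi_n\text{-}{\rm Th}(\mathbb{N})$, i.e. to $\Sigma_n$-soundness. Now $n$-consistency does yield $\Sigma_n$-soundness when $n\leqslant 2$, because upgrading $n$-consistency to $\Sigma_n$-soundness requires only $\Sigma_{n-1}$-completeness, which $\q$ supplies for $n-1\leqslant 1$. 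But for $n\geqslant 3$ an $n$-consistent extension of $\q$ need not be $\Sigma_n$-sound (it may prove a false $\Sigma_n$-sentence $\exists x\,\psi(x)$ while failing to prove the true $\Sigma_{n-1}$-instances $\neg\psi(\overline{m})$ that would be needed to violate $n$-consistency). Hence the $n$-consistency assumption cannot simply be turned into the assumption of Theorem~\ref{thm-1st}, and a different route is forced.

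Concretely, suppose toward a contradiction that $T$ is a $\Pi_n$-definable, $n$-consistent extension of $\q$ which \emph{is} $\Pi_{n+1}$-deciding. First I would record two elementary consequences. Since an inconsistent theory would refute $\psi(\overline{m})$ for every $m$ while proving $\exists x\,\psi(x)$, $n$-consistency already gives that $T$ is consistent. And since every $\Pi_n$-sentence is (up to provable equivalence) a $\Pi_{n+1}$-sentence, the assumed $\Pi_{n+1}$-decidability of $T$ entails that $T$ is $\Pi_n$-deciding.

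Now the key step: being $n$-consistent and $\Pi_n$-deciding, $T$ is $\Pi_n$-complete by Lemma~\ref{lem-hajek}, that is, $\Pi_n\text{-}{\rm Th}(\mathbb{N})\subseteq T$. At this point $T$ meets exactly the hypotheses of Corollary~\ref{cor-haj2}: it is a consistent, $\Pi_n$-definable extension of $\q$ with $\Pi_n\text{-}{\rm Th}(\mathbb{N})\subseteq T$. That corollary then asserts that $T$ is \emph{not} $\Pi_{n+1}$-deciding, directly contradicting our assumption, which finishes the argument. (For $n=0$ the statement is just Theorem~\ref{thm-1st}, so nothing new is needed.)

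The main obstacle is precisely the recognition in the first paragraph that one cannot pass through $\Sigma_n$-soundness for $n\geqslant 3$, so that the \emph{assumed} decidability of $T$ must itself be used---via Lemma~\ref{lem-hajek}---to manufacture the $\Pi_n$-completeness that Corollary~\ref{cor-haj2} requires. I would also flag that the resulting proof is a genuine reductio: it never exhibits a particular independent $\Pi_{n+1}$-sentence, since the witness is produced (inside Corollary~\ref{cor-haj2}) only after $\Pi_n$-decidability has been assumed and converted into $\Pi_n$-completeness. This non-exhibiting character is consistent with the paper's later claim that for $n>2$ no constructive proof of this incompleteness statement is available.
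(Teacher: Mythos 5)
Your proof is correct and takes essentially the same route as the paper's: both use the assumed $\Pi_n$-decidability together with Lemma~\ref{lem-hajek} to obtain $\Pi_n\text{-}{\rm Th}(\mathbb{N})\subseteq T$, and then invoke Theorem~\ref{thm-1st} (your detour through Corollary~\ref{cor-haj2} is just that theorem repackaged). The only difference is organizational---you run a reductio where the paper splits into the cases ``$T$ is/is not $\Pi_n$-deciding''---and your opening observation about why $n$-consistency cannot be upgraded to $\Sigma_n$-soundness for $n\geqslant 3$ correctly identifies the crux.
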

\centerline{
\fbox{$\textrm{{\bf Corollary~\ref{cor-hajek2}}}:\qquad  \q\subseteq T  \;\; \& \;\;  {\sf Axioms}_T\!\in\!\Pi_{n} \;\; \& \;\; n\text{-}{\sf Con}(T) \;\;\; \Longrightarrow \;\;\; T\not\in\Pi_{n+1}\text{-}\textrm{Deciding}$}}
\medskip
\begin{proof}
Let  $T\supseteq\q$ be an $n$-consistent extension of $\q$  such that ${\sf Axioms}_T\in\Pi_{n}$. If $T$ is not $\Pi_n$-deciding, then there is nothing to prove. If $T$ is $\Pi_n$-deciding, then by Lemma~\ref{lem-hajek} we have $\Pi_n\text{-}{\rm Th}(\mathbb{N})\subseteq T$, and so $T$ is consistent with $\Pi_n\text{-}{\rm Th}(\mathbb{N})$. Thus, by Theorem~\ref{thm-1st}, $T$ is not $\Pi_{n+1}$-deciding.
\end{proof}

\noindent Let us note that   for a $\Pi_n$-definable extension of $\q$  (like $T$) Corollary~\ref{cor-hajek} implies the $\Pi_{n+1}$-undecidability (of $T$) under the condition of $(n+2)$-consistency (of $T$) because ${\sf Axioms}_T\in\Pi_n$ implies ${\sf Prov}_T\in\Pi_{n+2}$; while Corollary~\ref{cor-hajek2} derives the same conclusion (of the $\Pi_{n+1}$-undecidability of $T$) under the assumption of $n$-consistency (of $T$). So, we can argue that Theorem~\ref{thm-1st} somehow strengthens Theorem~2.5(2) of~\cite{hajek}.
The following lemma, needed later, generalizes (and modifies)  Craig's Trick.

\begin{lemma}\label{lem-craig}
Any $\Sigma_{n+1}$-definable (arithmetical) theory  is equivalent with a $\Pi_n$-definable theory.
\end{lemma}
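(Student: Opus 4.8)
The plan is to generalise Craig's Trick: the gap between a $\Sigma_{n+1}$-definition and a $\Pi_n$-definition is precisely one leading existential (witness) quantifier, and Craig's device lets us \emph{absorb} that witness into the syntactic shape of each axiom. Concretely, using the standard normal form (contracting the leading block of existentials by a pairing function), I would first write ${\sf Axioms}_T(x)=\exists y\,\theta(x,y)$ with $\theta\in\Pi_n$; since membership is decided by truth in $\mathbb{N}$, replacing ${\sf Axioms}_T$ by this $\mathbb{N}$-equivalent formula does not change the axiom set.

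Next I would introduce a padding operation. For a formula $\varphi$ and a number $y$ set ${\rm pad}(\varphi,y)=\varphi\wedge\underbrace{(0\!=\!0)\wedge\cdots\wedge(0\!=\!0)}_{y}$ (right-associated), engineered so that (i) from $\ulcorner{\rm pad}(\varphi,y)\urcorner$ one can recover both $\ulcorner\varphi\urcorner$ and the pad-length $y$ by $\Sigma_0$-definable (bounded) functions, and (ii) because each dummy conjunct $0\!=\!0$ is logically valid, $\vdash{\rm pad}(\varphi,y)\leftrightarrow\varphi$. I would then let $B$ be the theory whose axioms are exactly the sentences ${\rm pad}(\varphi,y)$ for which $\theta(\ulcorner\varphi\urcorner,y)$ holds.

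I would then check that $B$ is $\Pi_n$-definable. Writing the decoding predicate $D(w,x,y)$, asserting ``$w=\ulcorner{\rm pad}(\varphi,y)\urcorner$ with $\ulcorner\varphi\urcorner=x$'', as a $\Sigma_0$-formula and noting that the recovered components satisfy $x,y\!\leqslant\!w$, we obtain ${\sf Axioms}_B(w)\equiv\exists x,y\!\leqslant\!w\,\big[D(w,x,y)\wedge\theta(x,y)\big]$. Since $D\in\Sigma_0\subseteq\Pi_n$, since $\Pi_n$ is closed under conjunction, and since (by the closure properties recorded in the Preliminaries) $\Pi_n$ is closed under bounded existential quantification, this lands ${\sf Axioms}_B$ in $\Pi_n$; for $n=0$ the whole formula is $\Sigma_0$, so the statement specialises to the classical Craig Trick (an \textsc{re} theory is equivalent to a decidable one).

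Finally I would verify deductive equivalence. If $\varphi$ is an axiom of $T$ then $\theta(\ulcorner\varphi\urcorner,y_0)$ holds for some witness $y_0$, so ${\rm pad}(\varphi,y_0)$ is an axiom of $B$ and, by (ii), $B\vdash\varphi$; conversely every axiom of $B$ is some ${\rm pad}(\varphi,y)$ with $\varphi$ an axiom of $T$, whence $T\vdash{\rm pad}(\varphi,y)$ again by (ii). Hence $T$ and $B$ have identical deductive closures. The main obstacle is purely bookkeeping: designing the padding so that it is genuinely $\Sigma_0$-decodable (keeping $D$ bounded) while remaining logically transparent, and confirming that the bounded existential quantifier applied to the $\Pi_n$-matrix $D\wedge\theta$ does not raise the quantifier level — both of which are guaranteed by the closure properties of $\Pi_n$ stated in the Preliminaries.
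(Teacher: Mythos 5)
Your proof is correct and takes essentially the same route as the paper's: contract the leading existential block into a single bounded witness, then absorb that witness into a logically inert pad on each axiom so that the new axiom set is recognised by a bounded search below the code followed by the $\Pi_n$ matrix, with deductive equivalence immediate from the validity of the pad. The only (cosmetic) difference is the padding device itself --- you conjoin $y$ copies of $0=0$, whereas the paper conjoins the single numeral identity $\overline{k}=\overline{k}$.
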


\begin{proof}
If ${\sf Axioms}_T(x)=\exists x_1\cdots\exists x_n \theta(x,x_1,\cdots,x_n)$ with $\theta\!\in\!\Pi_{n}$ then ${\sf Axioms}_T(x)\equiv\exists y\theta'(x,y)$ with  $\theta'(x,y)=\exists x_1\!\leqslant\!y\cdots\exists x_n\!\leqslant\!y\,\theta(x,x_1,\cdots,x_n)\in\Pi_{n}$. Now,   $T'=\{\varphi\wedge\big(\overline{k}\!=\!\overline{k})
\mid\mathbb{N}
\models\theta'(\ulcorner\varphi\urcorner,k)\}$  is equivalent with  $T$ and is $\Pi_{n}$-definable by  ${\sf Axioms}_{T'}(x)\equiv\exists y,z\!\leqslant\!x\big(\theta'(y,z)\wedge
\big[x=(y\wedge\ulcorner\overline{z}\!=\!\overline{z}
\urcorner)\big]\big)$.
\end{proof}

\begin{corollary}\label{cor-1st}
No $\Sigma_n$-definable ($n\!>\!0$) extension of {\rm $\q$}  can be $\Pi_{n}$-deciding if it is consistent with $\Pi_n$-${\rm Th}(\mathbb{N})$.
\end{corollary}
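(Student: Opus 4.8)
The plan is to bootstrap from Theorem~\ref{thm-1st} by lowering the complexity of the axiomatization via the generalized Craig's trick of Lemma~\ref{lem-craig}. The only discrepancy between the present hypotheses and those of Theorem~\ref{thm-1st} is the shape of the axiom set: here the axioms are merely $\Sigma_n$-definable, whereas Theorem~\ref{thm-1st}, read at index $n-1$, requires a $\Pi_{n-1}$-definable axiomatization and in return delivers exactly $\Pi_n$-undecidability. Lemma~\ref{lem-craig}, applied in the form $\Sigma_n=\Sigma_{(n-1)+1}$, supplies precisely this reduction; the standing assumption $n\!>\!0$ guarantees $n-1\!\geqslant\!0$, so the lemma is applicable.

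First I would invoke Lemma~\ref{lem-craig} to replace $T$ by an equivalent theory $T'$ whose axiom set is $\Pi_{n-1}$-definable. Since ``equivalent'' means having the same deductive closure, every property relevant to the conclusion transfers between $T$ and $T'$: because $T\vdash\q$ we get $\q\subseteq T'$; because $T$ and $T'$ prove the same sentences, $T'+\Pi_n\text{-}{\rm Th}(\mathbb{N})$ is consistent exactly when $T+\Pi_n\text{-}{\rm Th}(\mathbb{N})$ is; and $T'$ decides a given $\Pi_n$-sentence iff $T$ does, so $T'\!\in\!\Pi_n\text{-}\textrm{Deciding}$ iff $T\!\in\!\Pi_n\text{-}\textrm{Deciding}$.

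Next I would verify that $T'$ meets the exact hypotheses of Theorem~\ref{thm-1st} at index $n-1$. It is a $\Pi_{n-1}$-definable extension of $\q$ by the previous step, so it remains only to supply consistency with $\Pi_{n-1}\text{-}{\rm Th}(\mathbb{N})$. This is immediate from the given consistency with $\Pi_n\text{-}{\rm Th}(\mathbb{N})$: every true $\Pi_{n-1}$-sentence is logically equivalent to a true $\Pi_n$-sentence, so $\Pi_{n-1}\text{-}{\rm Th}(\mathbb{N})$ is subsumed, up to equivalence, in $\Pi_n\text{-}{\rm Th}(\mathbb{N})$, and consistency with the larger set forces consistency with the smaller one. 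Theorem~\ref{thm-1st} then gives that $T'$ is not $\Pi_{(n-1)+1}=\Pi_n$-deciding, and transferring this back through the equivalence shows that $T$ is not $\Pi_n$-deciding, as claimed.

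There is no genuine obstacle here beyond careful index bookkeeping, since the substantive work already resides in Theorem~\ref{thm-1st} and in the complexity count inside Lemma~\ref{lem-craig}. The single point worth a moment's care is that the passage to the equivalent theory $T'$ really does preserve both $\Pi_n$-decidability and the relevant consistency statement; but each of these is framed purely in terms of provability, hence depends only on the deductive closure and is insensitive to which particular axiomatization one chooses.
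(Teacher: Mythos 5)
Your proposal is correct and follows essentially the same route as the paper: apply the generalized Craig's trick (Lemma~\ref{lem-craig}) to replace $T$ by an equivalent $\Pi_{n-1}$-definable theory $T'$, observe that consistency with $\Pi_n$-${\rm Th}(\mathbb{N})$ yields consistency with $\Pi_{n-1}$-${\rm Th}(\mathbb{N})$, and then invoke Theorem~\ref{thm-1st} at index $n-1$. The only cosmetic difference is that the paper dispatches the case $n=1$ separately by citing G\"odel's first incompleteness theorem, whereas you handle all $n>0$ uniformly (which works, since Theorem~\ref{thm-1st} holds at $n=0$).
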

\centerline{
\fbox{$\textrm{{\bf Corollary~\ref{cor-1st}}}:\qquad  \q\subseteq T  \;\; \& \;\;  {\sf Axioms}_T\!\in\!\Sigma_{n} \;\; \& \;\; {\sf Con}\big(T+\Pi_n\text{-}{\rm Th}(\mathbb{N})\big) \;\;\; \Longrightarrow \;\;\; T\not\in\Pi_{n}\text{-}\textrm{Deciding}$}}
\medskip
\begin{proof}
For $n=1$ this is G\"odel's first incompleteness theorem. Suppose that $n>1$, and that ${\sf Axioms}_T\in\Sigma_n$ for some $T\supseteq\q$ such that $T+\Pi_n$-${\rm Th}(\mathbb{N})$ is consistent. By Lemma~\ref{lem-craig} there exists  a $\Pi_{n-1}$-definable theory $T'$  equivalent with $T$.   Now, $T'$ contains $\q$, is $\Pi_{n-1}$-definable, and is consistent with $\Pi_{n-1}$-${\rm Th}(\mathbb{N})$ (because  $T$ is consistent with $\Pi_n$-${\rm Th}(\mathbb{N})$).  Thus,  by Theorem~\ref{thm-1st} the theory $T'$ is not $\Pi_n$-deciding; neither is  $T$.
\end{proof}

\noindent Actually, the consistency of $T$ with $\Pi_{n-1}$-${\rm Th}(\mathbb{N})$ suffices for the above proof to go through.

\begin{corollary}\label{cor-2nd}
No $\Sigma_n$-definable 
 extension of {\rm $\q$}  can be $\Pi_{n}$-deciding if it is consistent with $\Pi_{n-1}$-${\rm Th}(\mathbb{N})$.
\hfill\ding{113}
\end{corollary}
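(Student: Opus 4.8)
The plan is to recycle the proof of Corollary~\ref{cor-1st} almost verbatim, exploiting the observation (recorded in the remark immediately preceding this statement) that that argument never actually invoked the full strength of consistency with $\Pi_n\text{-}{\rm Th}(\mathbb{N})$, but only consistency with $\Pi_{n-1}\text{-}{\rm Th}(\mathbb{N})$. Concretely, suppose $T\supseteq\q$ has ${\sf Axioms}_T\in\Sigma_n$ and $T+\Pi_{n-1}\text{-}{\rm Th}(\mathbb{N})$ is consistent; I want to conclude that $T$ is not $\Pi_n$-deciding.

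First I would apply Craig's Trick in the form of Lemma~\ref{lem-craig}: writing $\Sigma_n=\Sigma_{(n-1)+1}$, that lemma produces a $\Pi_{n-1}$-definable theory $T'$ logically equivalent to $T$. Since $T'$ and $T$ prove exactly the same sentences, $T'$ still extends $\q$, the theory $T'+\Pi_{n-1}\text{-}{\rm Th}(\mathbb{N})$ is consistent, and $T'$ is $\Pi_n$-deciding if and only if $T$ is. Next I would feed $T'$ into Theorem~\ref{thm-1st} with its index parameter set to $n-1$: a $\Pi_{n-1}$-definable extension of $\q$ that is consistent with $\Pi_{n-1}\text{-}{\rm Th}(\mathbb{N})$ cannot be $\Pi_n$-deciding. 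This gives that $T'$, and hence $T$, is not $\Pi_n$-deciding, which is exactly the desired conclusion.

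The argument carries no real obstacle; the only points needing a moment's care are bookkeeping ones. The essential check is that the index shift lines up: applying Lemma~\ref{lem-craig} to a $\Sigma_n$-definable theory lands in the class $\Pi_{n-1}$, which is precisely the class to which Theorem~\ref{thm-1st} (with parameter $n-1$) applies, demanding consistency with $\Pi_{n-1}\text{-}{\rm Th}(\mathbb{N})$ rather than the stronger $\Pi_n\text{-}{\rm Th}(\mathbb{N})$ used in Corollary~\ref{cor-1st}. For the base case $n=1$ the application of Lemma~\ref{lem-craig} yields a $\Pi_0$-definable (i.e. {\sc re}) theory, and the hypothesis of consistency with $\Pi_0\text{-}{\rm Th}(\mathbb{N})$ collapses, by the $\Sigma_1$-completeness of $\q$, to plain consistency of $T$; so here the statement is exactly Rosser's Incompleteness Theorem, in agreement with the earlier remark that Theorem~\ref{thm-1st} is Rosser's theorem for $n=0$. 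Thus a single uniform argument covers all $n\geqslant 1$.
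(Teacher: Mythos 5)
Your proposal is correct and matches the paper's own (implicit) proof: the paper proves Corollary~\ref{cor-1st} by exactly this Craig's-Trick-plus-Theorem~\ref{thm-1st} route and then observes, in the sentence immediately preceding the statement, that only consistency with $\Pi_{n-1}$-${\rm Th}(\mathbb{N})$ was actually used, which is why the corollary carries no separate proof. Your uniform treatment of the case $n=1$ (via Theorem~\ref{thm-1st} at parameter $0$, i.e.\ Rosser's theorem) is a small tidying-up of the paper's proof of Corollary~\ref{cor-1st}, which handled $n=1$ separately by citing G\"odel's theorem.
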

\centerline{
\fbox{$\textrm{{\bf Corollary~\ref{cor-2nd}}}:\qquad  \q\subseteq T  \;\; \& \;\;  {\sf Axioms}_T\!\in\!\Sigma_{n} \;\; \& \;\; {\sf Con}\big(T+\Pi_{n-1}\text{-}{\rm Th}(\mathbb{N})\big) \;\;\; \Longrightarrow \;\;\; T\not\in\Pi_{n}\text{-}\textrm{Deciding}$}}
\medskip

\noindent By G\"odel's first incompleteness theorem no $1$-consistent and $\Sigma_1$-definable extension of $\q$ can be $\Pi_1$-deciding; another generalization of this theorem is the $\Pi_n$-undecidability of any $n$-consistent and $\Sigma_n$-definable extension of $\q$.

\begin{corollary}\label{cor-g2}
No $\Sigma_n$-definable 
 extension of {\rm $\q$}  can be $\Pi_{n}$-deciding if it is $n$-consistent.
\end{corollary}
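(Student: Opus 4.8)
The plan is to obtain Corollary~\ref{cor-g2} as a composition of Lemma~\ref{lem-hajek} with the semantic version already established in Corollary~\ref{cor-1st}, exactly paralleling the way Corollary~\ref{cor-hajek2} was deduced from Lemma~\ref{lem-hajek} and Theorem~\ref{thm-1st}. The guiding idea is that $n$-consistency, though a purely syntactic hypothesis, forces the inclusion $\Pi_n\text{-}{\rm Th}(\mathbb{N})\subseteq T$ the moment $T$ decides every $\Pi_n$-sentence, and this inclusion is precisely the ``consistency with $\Pi_n\text{-}{\rm Th}(\mathbb{N})$'' that Corollary~\ref{cor-1st} requires of a $\Sigma_n$-definable theory.

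Concretely, I would take $T\supseteq\q$ with ${\sf Axioms}_T\in\Sigma_n$ and $n\text{-}{\sf Con}(T)$, and argue by contraposition: suppose $T$ were $\Pi_n$-deciding. First I would record that $n$-consistency entails ordinary consistency (for $n\geqslant 1$): an inconsistent $T$ would prove both $\exists x(x\!=\!x)$ and every $\neg(\overline{m}\!=\!\overline{m})$, and since $(x\!=\!x)\in\Pi_0\subseteq\Pi_{n-1}$ this violates the $n$-consistency clause. Next, applying Lemma~\ref{lem-hajek} to the $n$-consistent, $\Pi_n$-deciding theory $T$ yields that $T$ is $\Pi_n$-complete, i.e. $\Pi_n\text{-}{\rm Th}(\mathbb{N})\subseteq T$. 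Combined with the consistency of $T$, this gives that $T+\Pi_n\text{-}{\rm Th}(\mathbb{N})$, which is just $T$ itself, is consistent, so $T$ is consistent with $\Pi_n\text{-}{\rm Th}(\mathbb{N})$. Finally, Corollary~\ref{cor-1st} then says that such a $\Sigma_n$-definable extension of $\q$ cannot be $\Pi_n$-deciding, contradicting the supposition; hence $T$ is not $\Pi_n$-deciding.

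Since every ingredient is already in place, I do not expect a genuine obstacle here; the statement is essentially a bookkeeping combination of two earlier results, with the $n$-consistency hypothesis being the ``harder'' syntactic assumption that Lemma~\ref{lem-hajek} trades for the semantic inclusion. The only points that warrant a moment's care are the passage from $n$-consistency to plain consistency (so that $\Pi_n\text{-}{\rm Th}(\mathbb{N})\subseteq T$ genuinely witnesses consistency with $\Pi_n\text{-}{\rm Th}(\mathbb{N})$) and the implicit restriction $n\geqslant 1$ inherited from Corollary~\ref{cor-1st}. One could equally route the final step through Corollary~\ref{cor-2nd} rather than Corollary~\ref{cor-1st}, since $\Pi_n\text{-}{\rm Th}(\mathbb{N})\subseteq T$ a fortiori yields consistency with $\Pi_{n-1}\text{-}{\rm Th}(\mathbb{N})$.
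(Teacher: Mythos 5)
Your proof is correct, but it takes a different route from the paper's. The paper's own proof is a one\--liner: it applies Craig's trick (Lemma~\ref{lem-craig}) to replace the $\Sigma_n$-definable theory $T$ by an equivalent $\Pi_{n-1}$-definable theory, observes that this theory is still $(n\!-\!1)$-consistent, and invokes Corollary~\ref{cor-hajek2} at level $n\!-\!1$. You instead work at level $n$ directly, running the same Lemma~\ref{lem-hajek}\,+\,incompleteness\--theorem pattern that the paper uses for Corollary~\ref{cor-hajek2}: contrapositively, $\Pi_n$-decidability plus $n$-consistency forces $\Pi_n\text{-}{\rm Th}(\mathbb{N})\subseteq T$, hence consistency with $\Pi_n\text{-}{\rm Th}(\mathbb{N})$, and Corollary~\ref{cor-1st} finishes. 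Your side remarks (that $n$-consistency yields plain consistency, and that $n\geqslant 1$ is implicit) are correct and worth recording. The trade\--off is that the paper's reduction consumes only $(n\!-\!1)$-consistency of $T$, which is exactly what lets it state the stronger Corollary~\ref{cor-gr} immediately afterwards with no extra work; your argument as written genuinely uses $n$-consistency (Lemma~\ref{lem-hajek} is applied at level $n$), so to recover Corollary~\ref{cor-gr} you would have to rerun it one level down, applying Lemma~\ref{lem-hajek} at level $n\!-\!1$ to get $\Pi_{n-1}\text{-}{\rm Th}(\mathbb{N})\subseteq T$ and then closing with Corollary~\ref{cor-2nd} --- essentially the variant you already sketch in your last sentence. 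What your route buys in exchange is uniformity: it makes Corollaries~\ref{cor-hajek2} and~\ref{cor-g2} visibly two instances of one argument, without a second pass through Craig's trick.
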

\centerline{
\fbox{$\textrm{{\bf Corollary~\ref{cor-g2}}}:\qquad  \q\subseteq T  \;\; \& \;\;  {\sf Axioms}_T\!\in\!\Sigma_{n} \;\; \& \;\; n\text{-}{\sf Con}(T) \;\;\; \Longrightarrow \;\;\; T\not\in\Pi_{n}\text{-}\textrm{Deciding}$}}
\medskip
\begin{proof}
By Lemma~\ref{lem-craig} any $\Sigma_n$-definable theory is equivalent with a $\Pi_{n-1}$-definable theory, and if that theory is $(n\!-\!1)$-consistent, then (extending $\q$) it cannot be $\Pi_n$-deciding by Corollary~\ref{cor-hajek2}.
\end{proof}

In fact, we can prove even a more general theorem here: no  $(n\!-\!1)$-consistent and $\Sigma_n$-definable extension of $\q$ can be  $\Pi_n$-deciding (because what was used in the above proof was the $(n\!-\!1)$-consistency of the theory); this is actually a generalization of  G\"odel-Rosser's incompleteness theorem.

\begin{corollary}\label{cor-gr}
No $\Sigma_n$-definable 
 extension of {\rm $\q$}  can be $\Pi_{n}$-deciding if it is $(n\!-\!1)$-consistent.
 \hfill\ding{111}
\end{corollary}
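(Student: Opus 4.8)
The plan is to follow the proof of Corollary~\ref{cor-g2} almost word for word, exploiting the observation made in the paragraph just before this corollary: the consistency strength actually consumed there was only $(n\!-\!1)$-consistency, never full $n$-consistency. So let $T\supseteq\q$ be $\Sigma_n$-definable and $(n\!-\!1)$-consistent, and aim to show $T$ is not $\Pi_n$-deciding.

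First I would lower the definitional complexity. Since ${\sf Axioms}_T\in\Sigma_n=\Sigma_{(n-1)+1}$, Lemma~\ref{lem-craig} read at index $n-1$ supplies a $\Pi_{n-1}$-definable theory $T'$ equivalent with $T$. As $T'$ and $T$ have the same deductive closure, $T'$ also extends $\q$ and, crucially, $T'$ inherits $(n\!-\!1)$-consistency from $T$: the notion of $(n\!-\!1)$-consistency is phrased purely in terms of what the theory proves, so equivalent theories are $(n\!-\!1)$-consistent together. Then I would invoke Corollary~\ref{cor-hajek2} at index $n-1$, which says that a $\Pi_{n-1}$-definable, $(n\!-\!1)$-consistent extension of $\q$ cannot be $\Pi_{(n-1)+1}=\Pi_n$-deciding. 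Hence $T'$, and therefore the equivalent theory $T$, fails to be $\Pi_n$-deciding, which is exactly the claim.

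There is essentially no obstacle beyond bookkeeping. The only points needing a line of verification are that equivalence of theories transports $(n\!-\!1)$-consistency (immediate, since the property quantifies only over provable formulas), and that the index shifts in Lemma~\ref{lem-craig} and Corollary~\ref{cor-hajek2} line up once one writes $\Sigma_n$ as $\Sigma_{(n-1)+1}$ and $\Pi_n$ as $\Pi_{(n-1)+1}$. In this sense the corollary is not a new theorem but a sharpening of the hypothesis of Corollary~\ref{cor-g2}, and it is this version that deserves to be read as the genuine generalization of G\"odel--Rosser's incompleteness theorem promised in the surrounding text.
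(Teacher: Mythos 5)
Your proof is correct and is exactly the argument the paper intends: the corollary carries only a \ding{111} because its proof is the observation, stated in the paragraph just before it, that the proof of Corollary~\ref{cor-g2} (Craig's trick via Lemma~\ref{lem-craig} to pass to an equivalent $\Pi_{n-1}$-definable theory, then Corollary~\ref{cor-hajek2} at index $n-1$) only ever uses $(n\!-\!1)$-consistency. Your index bookkeeping and the remark that $(n\!-\!1)$-consistency transfers between deductively equivalent theories are both right.
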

\centerline{
\fbox{$\textrm{{\bf Corollary~\ref{cor-gr}}}:\qquad  \q\subseteq T  \;\; \& \;\;  {\sf Axioms}_T\!\in\!\Sigma_{n} \;\; \& \;\; (n\!-\!1)\text{-}{\sf Con}(T) \;\;\; \Longrightarrow \;\;\; T\not\in\Pi_{n}\text{-}\textrm{Deciding}$}}

\section{Rosser's Theorem Optimized}
Rosser's Trick is one of the most fruitful tricks in Mathematical Logic and Recursion Theory (cf. \cite{smullyan}). One of its uses is getting rid of the condition of $\omega$-consistency (or 1-consistency or equivalently  consistency with the set of true $\Pi_1$-sentences) from the hypothesis of G\"odel's first incompleteness theorem. Thus, G\"odel-Rosser's incompleteness theorem (see e.g. \cite{hajekpudlak,smith,smullyan}) can be depicted as:

\medskip
\centerline{
\fbox{$\textrm{G\"odel--Rosser (1936)}:\qquad  \q\subseteq T  \;\; \& \;\;  {\sf Axioms}_T\!\in\!\Sigma_{1} \;\; \& \;\; {\sf Con}(T) \;\;\; \Longrightarrow \;\;\; T\not\in\Pi_{1}\text{-}\textrm{Deciding}$}}
\medskip

\noindent
In the light of our above mentioned results it is natural to expect a generalization of this theorem to higher levels (to $\Sigma_n$ or $\Pi_n$ definable theories); alas (by the following theorem for $n=0$) there can bo no such generalization for Rosser's  Theorem.

\begin{theorem}\label{thm-rosser}
There exists a complete (and consistent) and $\Sigma_{n+2}$-definable extension of {\rm $\q+\Pi_n\text{-}{\rm Th}(\mathbb{N})$}.
\end{theorem}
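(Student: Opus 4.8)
The plan is to realise the desired theory as the leftmost consistent branch of the Lindenbaum completion tree of $T_0=\q+\Pi_n\text{-}{\rm Th}(\mathbb{N})$, and then — this being the real content — to check that the resulting axiom set is definable by a $\Sigma_{n+2}$-formula. First I would fix a recursive enumeration $\varphi_0,\varphi_1,\ldots$ of all sentences and define a binary decision sequence $\sigma$ by the usual greedy recursion: having fixed $\sigma\!\upharpoonright\!i$ (which determines sentences $\chi^\sigma_0,\ldots,\chi^\sigma_{i-1}$, where $\chi^\sigma_j$ is $\varphi_j$ when $\sigma(j)=1$ and $\neg\varphi_j$ when $\sigma(j)=0$), set $\sigma(i)=1$ iff $T_0+\{\chi^\sigma_0,\ldots,\chi^\sigma_{i-1},\varphi_i\}$ is consistent, and $\sigma(i)=0$ otherwise. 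Since $T_0$ is consistent (it holds in $\mathbb{N}$), a routine induction using that at least one of $\varphi_i,\neg\varphi_i$ preserves consistency shows that every finite initial segment $T_0+\{\chi^\sigma_j:j<i\}$ is consistent; by compactness $T^{\ast}=T_0+\{\chi^\sigma_i:i\in\mathbb{N}\}$ is consistent, and by construction it decides every $\varphi_i$, hence is complete. Clearly $T^{\ast}\supseteq T_0\supseteq\q+\Pi_n\text{-}{\rm Th}(\mathbb{N})$.

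Next comes the complexity count. Because ${\sf Axioms}_{T_0}\in\Pi_n$ (as $\Pi_n\text{-}{\sf True}\in\Pi_n$), the preliminaries give ${\sf ConjAx}_{T_0}\in\Pi_n$ and hence ${\sf Prov}_{T_0}\in\Sigma_{n+1}$; so for a finite list of sentences whose conjunction is coded by $d$, the statement ``$T_0+\{\cdots\}$ is consistent'', namely $\neg{\sf Prov}_{T_0}(\neg d)$, is $\Pi_{n+1}$ uniformly in $d$. The greedy clause ``$\sigma(j)=1\leftrightarrow T_0+\{\cdots,\varphi_j\}$ is consistent'' is therefore a boolean combination of a $\Pi_{n+1}$-formula, so $\Delta_{n+2}$; taking the bounded conjunction over $j<\ell$ I obtain a $\Delta_{n+2}$-predicate ${\sf Greedy}(s)$ expressing ``$s$ is the correct greedy decision string of length $\ell=\mathrm{len}(s)$''.

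Finally I would read off the definition of the axioms. Since the greedy string is unique, $\sigma(i)=1$ is equivalent to $\exists s\,[\,{\sf Greedy}(s)\wedge\mathrm{len}(s)=i+1\wedge s(i)=1\,]$, an existential quantifier over a $\Delta_{n+2}$-matrix, so both $\sigma(i)=1$ and $\sigma(i)=0$ are $\Sigma_{n+2}$; whence
\[
{\sf Axioms}_{T^{\ast}}(\psi)\;\equiv\;{\sf Axioms}_{T_0}(\psi)\,\vee\,\exists i\big[(\psi{=}\varphi_i\wedge\sigma(i){=}1)\vee(\psi{=}\neg\varphi_i\wedge\sigma(i){=}0)\big]
\]
is $\Sigma_{n+2}$, since $\Pi_n\subseteq\Sigma_{n+2}$ and $\Sigma_{n+2}$ is closed under disjunction and existential quantification. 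This yields a complete, consistent, $\Sigma_{n+2}$-definable extension of $\q+\Pi_n\text{-}{\rm Th}(\mathbb{N})$, as required.

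I expect the main obstacle to be exactly this complexity bookkeeping: the naive recursion defining $\sigma$ appears to need an oracle for the $\Pi_{n+1}$ consistency predicate and, iterated $i$ times, threatens to climb the hierarchy. The device that pins it to level $n+2$ is to guess the \emph{entire} finite decision string $s$ with a single existential quantifier and then verify, by a bounded check, that $s$ is the genuine greedy string. One must take care that the verification of the biconditionals stays within $\Delta_{n+2}$ — a finite boolean combination of $\Sigma_{n+1}$- and $\Pi_{n+1}$-conditions closed under the bounded quantifier over $j<\mathrm{len}(s)$ — so that the single outer existential quantifier does not push the definition beyond $\Sigma_{n+2}$.
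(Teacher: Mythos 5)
Your proposal is correct and follows essentially the same route as the paper: a greedy Lindenbaum completion of $\q+\Pi_n\text{-}{\rm Th}(\mathbb{N})$, with $\Sigma_{n+2}$-definability obtained by existentially quantifying over the entire finite construction history and verifying it via bounded quantifiers over the $\Pi_{n+1}$ consistency predicate $\neg{\sf Prov}_{T_0}(\cdot)$. The only cosmetic difference is that you code the history as a binary decision string while the paper codes it as the sequence of chosen sentences $\langle y\!\downharpoonright\!u\rangle$; the complexity bookkeeping is identical.
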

\begin{proof}
That there exists a complete $\Sigma_2$-definable extension of $\q$ is almost a classical fact; see \cite{smorynski}.
Here, we generalize this result to $\q+\Pi_n\text{-}{\rm Th}(\mathbb{N})$.
Let the theory $S$ be $\q$ when $n=0$ and be $\q+\Pi_n\text{-}{\rm Th}(\mathbb{N})$ when $n>0$ (note that $\Pi_0\text{-}{\rm Th}(\mathbb{N})\subseteq\q$).
Theory $S$ can be completed by Lindenbaum's Lemma as follows: for an enumeration of all the sentences $\varphi_0,\varphi_1,\varphi_2,\cdots$  take $T_0=S$, and let $T_{n+1}=T_n+\varphi_n$ if ${\sf Con}(T_n+\varphi_n)$ and let $T_{n+1}=T_n+\neg\varphi$ otherwise [if $\neg{\sf Con}(T_n+\varphi_n)$]. Then the
theory  $T^\ast=\bigcup_{n\in\mathbb{N}} T_n$  is a complete extension of $S$; below we show the $\Sigma_{n+2}$-definability of $T^\ast$.
An enumeration of all the sentences  can be defined by a $\Sigma_0$-formula such as the following expression for
``$x$ is the (G\"odel number of the) $u^{\rm th}$ sentence'':

$${\sf Sent}\text{-}{\sf List}(x,u)=\big[{\sf Sent}(u)\wedge x=u\big]\vee\big[\neg{\sf Sent}(u)\wedge x=\ulcorner 0=0\urcorner\big].$$

Now, ${\sf Axioms}_{T^\ast}(x)$ can be defined by the following formula:

\bigskip
\bigskip
\bigskip

 $\exists y\Big[{\sf Seq}(y) \wedge [y]_{\ell{\rm en}(y)-1}=x \wedge\big(\forall u\!<\!\ell{\rm en}(y)\big[{\sf Sent}([y]_u)\big]\big)\wedge $

\hspace{12ex} $\forall u\!<\!\ell{\rm en}(y)\forall z\!\leqslant\!y\Big(\big({\sf Sent}\text{-}{\sf List}(z,u)\wedge{\sf Con'}(S+\langle y\!\downharpoonright\!u\rangle+z)\longrightarrow [y]_u=z\big)\wedge$

\hspace{30.5ex}  $\big({\sf Sent}\text{-}{\sf List}(z,u)\wedge\neg{\sf Con'}(S+\langle y\!\downharpoonright\!u\rangle+z)\longrightarrow  [y]_u=\neg z\big) \Big)\Big]$,

\noindent which is  $\Sigma_{n+2}$  because  the following formula (where $\texttt{q}$ is the G\"odel code of the conjunction of all the [finitely many] axioms of $\q$ and $\bot=[0\neq 0]$)

\noindent
${\sf Con'}(S\!+\!\langle y\!\downharpoonright\!u\rangle\!+\!z)\equiv\begin{cases} \forall v,w \big[{\sf ConjSeq}(v,\langle y\!\downharpoonright\!u\rangle)\!\rightarrow\!\neg{\sf Proof}(w,\texttt{q}\!\wedge\!v\!\wedge\!z\!\rightarrow
\ulcorner\bot\urcorner)\big] & \textrm{ if }n\!=\!0 \\ \forall t,v,w \big[\Pi_n\text{-}{\sf True}(t)\!\wedge\!{\sf ConjSeq}(v,\langle y\!\downharpoonright\!u\rangle)\!\rightarrow\!\neg{\sf Proof}(w,\texttt{q}\!\wedge\!t\!\wedge\!v\!\wedge\!z\!\rightarrow
\ulcorner\bot\urcorner)\big] & \textrm{ if }n\!>\!0\end{cases}$

\noindent
is  $\Pi_{n+1}$  since $\Pi_n\text{-}{\sf True}\in\Pi_n$ (and ${\sf ConjSeq},{\sf Proof}\!\in\!\Pi_0$).
\end{proof}

\subsection{Comparing $\Sigma_n$-Soundness with $n$-Consistency}
The assumptions on the theory $T$ used in Corollaries~\ref{cor-1st} and~\ref{cor-g2},  other than $\q\subseteq T \& {\sf Axioms}_T\!\in\!\Sigma_{n}$, are either consistency with the set of all true $\Pi_n$ sentences (or equivalently, $\Sigma_n$-soundness) or $n$-consistency of $T$ (cf. also Corollaries~\ref{cor-2nd} and~\ref{cor-gr}).
So, it is desirable to compare the assumptions of $\Sigma_n$-soundness and $n$-consistency used in these results.

\begin{proposition}\label{prop-compare}
{\rm (1)} If a theory is $\Sigma_n$-sound, then it is $n$-consistent.

{\rm (2)} If a $\Sigma_{n-1}$-complete theory is $n$-consistent, then it is $\Sigma_n$-sound. \end{proposition}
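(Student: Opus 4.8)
The plan is to prove the two implications separately, in each case arguing by contradiction and exploiting the fact that the witnessing formula in the definition of $n$-consistency is precisely a $\Sigma_n$-sentence $\exists x\,\psi(x)$ with $\psi\in\Pi_{n-1}$, whose negated numerical instances $\neg\psi(\overline{m})$ lie in $\Sigma_{n-1}$. Throughout I assume $n\geqslant 1$ (so that $\Pi_{n-1}$ is meaningful) and, as elsewhere in the paper, that the theory extends $\q$.

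For part (1), I would suppose $T$ is $\Sigma_n$-sound but fails to be $n$-consistent, and extract the offending formula $\varphi=\exists x\,\psi(x)$ with $\psi\in\Pi_{n-1}$ witnessing the failure, so that $T\vdash\neg\psi(\overline{m})$ for every $m\in\mathbb{N}$ together with $T\vdash\varphi$. Since $\varphi\in\Sigma_n$ and $T$ is $\Sigma_n$-sound, $T\vdash\varphi$ gives $\mathbb{N}\models\exists x\,\psi(x)$, so $\mathbb{N}\models\psi(\overline{m_0})$ for some $m_0$. On the other hand $T\vdash\neg\psi(\overline{m_0})$, and as $\psi\in\Pi_{n-1}$ we have $\neg\psi(\overline{m_0})\in\Sigma_{n-1}\subseteq\Sigma_n$, so $\Sigma_n$-soundness yields $\mathbb{N}\models\neg\psi(\overline{m_0})$, contradicting $\mathbb{N}\models\psi(\overline{m_0})$. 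The only point to watch is the inclusion $\Sigma_{n-1}\subseteq\Sigma_n$ (standard cumulativity of the hierarchy), which is what lets $\Sigma_n$-soundness act on the instances $\neg\psi(\overline{m_0})$ as well as on $\varphi$ itself.

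For part (2), I would start from an arbitrary $\Sigma_n$-sentence $\varphi$ with $T\vdash\varphi$ and aim to show $\mathbb{N}\models\varphi$. First I would put $\varphi$ into the normal form $\exists x\,\psi(x)$ with $\psi\in\Pi_{n-1}$: this is the usual prenex-and-pairing reduction, merging the leading existential block $\exists x_1\cdots\exists x_k$ into a single quantifier via $\psi(x)=\exists x_1\!\leqslant\! x\cdots\exists x_k\!\leqslant\! x\,\theta$, which remains $\Pi_{n-1}$ because $\Pi_{n-1}$ is closed under bounded existential quantification; the resulting equivalence holds in $\mathbb{N}$ and is provable in $\q$, hence in $T$. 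Assuming for contradiction $\mathbb{N}\not\models\varphi$, i.e.\ $\mathbb{N}\models\neg\psi(\overline{m})$ for every $m$, I would note $\neg\psi\in\Sigma_{n-1}$ and invoke $\Sigma_{n-1}$-completeness to conclude $T\vdash\neg\psi(\overline{m})$ for all $m$. Together with $T\vdash\exists x\,\psi(x)$ this is exactly a violation of $n$-consistency, giving the desired contradiction and hence $\mathbb{N}\models\varphi$.

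Neither direction presents a serious obstacle, and the arguments are essentially routine. The one place demanding a little care — the closest thing to a ``hard part'' — is the normal-form reduction in part (2): one must ensure that the given $\Sigma_n$-sentence is genuinely equivalent to a single-existential form $\exists x\,\psi(x)$ with $\psi\in\Pi_{n-1}$, so that $n$-consistency as defined is applicable, and that this equivalence is available both semantically in $\mathbb{N}$ and deductively in $T$ (which is why the tacit hypothesis $\q\subseteq T$ is used). Part (1) needs no such reduction, since the witness supplied by the failure of $n$-consistency already has the required shape.
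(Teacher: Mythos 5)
Your proof is correct and follows essentially the same route as the paper's: part (1) is the paper's argument stated contrapositively (the paper derives $T\not\vdash\neg\psi(\overline{m})$ for the true instance directly from $\Sigma_n$-soundness), and part (2) uses the same two ingredients, $n$-consistency to obtain an unprovable $\neg\psi(\overline{m})$ and $\Sigma_{n-1}$-completeness to conclude $\mathbb{N}\models\psi(\overline{m})$. The only difference is that you make explicit the reduction of an arbitrary $\Sigma_n$-sentence to the normal form $\exists x\,\psi(x)$ with $\psi\in\Pi_{n-1}$, a step the paper passes over by treating only sentences already in that shape; this is a harmless but welcome extra precaution.
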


\begin{proof}
(1) Assume $T\vdash\exists x\psi(x)$ for some $\Sigma_n$-sound theory $T$ and some formula $\psi\in\Pi_{n-1}$. By the $\Sigma_n$-soundness of $T$,  $\mathbb{N}\models\exists x\psi(x)$, and so  $\mathbb{N}\models\psi(m)$ for some $m\in\mathbb{N}$. Now, $\psi(\overline{m})\in\Pi_{n-1}\text{-}{\rm Th}(\mathbb{N})$, and again by the $\Sigma_n$-soundness of $T$ we have  $T\not\vdash\neg\psi(\overline{m})$.

\quad\; (2) Assume $T\vdash\exists x\psi(x)$ for some $\Sigma_{n-1}$-complete and $n$-consistent theory $T$ and some formula $\psi\in\Pi_{n-1}$. By $n$-consistency, there exists some $m\in\mathbb{N}$ such that $T\not\vdash\neg\psi(\overline{m})$. By $\Sigma_{n-1}$-completeness, $\mathbb{N}\not\models\neg\psi(\overline{m})$; and so $\mathbb{N}\models\psi(\overline{m})$, whence $\mathbb{N}\models\exists x\psi(x)$.
\end{proof}

\begin{remark}\label{rmk-compare}{\rm
In fact, for $n=0,1,2$  the notions of $\Sigma_n$-soundness and $n$-consistency are equivalent for $\Sigma_1$-complete theories (see Theorems~5,25,30  of~\cite{isaacson}); but for $n\!\geqslant\!3$, $n$-consistency does not imply $\Sigma_n$-soundness. Even, $\omega$-consistency does not imply $\Sigma_3$-soundness (see Theorem~19 of~\cite{isaacson} proved by Kreisel in 1955). Generally,   $\Sigma_n$-soundness does not imply $(n+1)$-consistency: Let $\gamma$ be the true $\Pi_{n+1}$-sentence constructed in Theorem~\ref{thm-1st} for the theory $\q+\Pi_n\text{-}{\rm Th}(\mathbb{N})$ and put $S=T+\neg\gamma$. Now, $S$ is $\Sigma_n$-sound and not $(n+1)$-consistent, since for $\neg\gamma=\exists x\delta(x)\in\Sigma_{n+1}$ we have $S\vdash\exists x\delta(x)$ and for any $k\in\mathbb{N}$ we have $S\vdash\neg\delta(\overline{k})$ since $S$ is $\Sigma_{n+1}$-complete by Remark~\ref{rem-1} and $\neg\delta(\overline{k})\in\Sigma_{n+1}\text{-}{\rm Th}(\mathbb{N})$ (because, if $\mathbb{N}\not\models\neg\delta(\overline{k})$ then $\mathbb{N}\models\delta(\overline{k})$ and so  $\mathbb{N}\models\neg\gamma$ contradiction!).
}\hfill\ding{71}\end{remark}

\noindent\;\; $\Sigma_0$-Sound$\;\,\Longleftarrow \Sigma_1$-Sound$\;\;\;\,\,\Longleftarrow \Sigma_2$-Sound$\;\;\;\;\,\Longleftarrow\Sigma_3$-Sound
$\;\;\;\,\,\Longleftarrow\cdots
\Sigma_n$-Sound $\cdots\,\,\,\,\,\Longleftarrow$ Sound

\noindent\;\; \qquad $\Updownarrow$ \qquad \qquad \qquad $\Updownarrow$
\qquad \qquad \qquad $\Updownarrow$ \qquad \quad \qquad \qquad $\Downarrow$
\qquad \qquad \qquad \qquad $\Downarrow$ \qquad \qquad  $\not\!\!\diagdown\!\!\!\!\!\!\!\nwarrow$
\quad $\Downarrow$

\noindent\;\; ${\sf Consistent}\Longleftarrow 1$-${\sf Consistent}\Longleftarrow 2$-${\sf Consistent}\Longleftarrow 3$-${\sf Consistent}\Longleftarrow\cdots n$-${\sf Consistent}\cdots\Longleftarrow\omega$-${\sf Consistent}$

\begin{corollary}\label{cor-rosser}
{\rm (1)} There exists a complete  extension of {\rm $\q$} which is $\Sigma_{n+2}$-definable and  consistent with $\Pi_{n}\text{-}{\rm Th}(\mathbb{N})$ (and so $n$-consistent).

\qquad\,\qquad\,\quad   {\rm (2)} There exists a complete  extension of {\rm $\q$} which is $\Pi_{n+1}$-definable and  consistent with $\Pi_{n}\text{-}{\rm Th}(\mathbb{N})$ (and so $n$-consistent).
\end{corollary}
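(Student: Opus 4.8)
The plan is to derive both parts directly from the already-proved Theorem~\ref{thm-rosser}, supplemented by Lemma~\ref{lem-craig} and Proposition~\ref{prop-compare}; the genuinely hard work (the Lindenbaum construction whose definitional complexity is kept down to $\Sigma_{n+2}$) has already been carried out there, so what remains is essentially bookkeeping. First, for part~(1), I would invoke Theorem~\ref{thm-rosser} to obtain a complete, consistent, $\Sigma_{n+2}$-definable theory $T^\ast$ extending $\q+\Pi_n\text{-}{\rm Th}(\mathbb{N})$. This $T^\ast$ is then the desired witness: it extends $\q$, it is complete and $\Sigma_{n+2}$-definable by construction, and since $\Pi_n\text{-}{\rm Th}(\mathbb{N})\subseteq T^\ast$ its consistency is exactly the consistency of $T^\ast+\Pi_n\text{-}{\rm Th}(\mathbb{N})$, i.e. $T^\ast$ is consistent with $\Pi_n\text{-}{\rm Th}(\mathbb{N})$. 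By the equivalence recorded in the preliminaries (for consistent extensions of $\q$), this means $T^\ast$ is $\Sigma_n$-sound, and hence $n$-consistent by Proposition~\ref{prop-compare}(1); this also settles the parenthetical claim.

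For part~(2) I would reduce the definitional complexity by Craig's trick. Since $T^\ast$ is $\Sigma_{n+2}$-definable, that is $\Sigma_{(n+1)+1}$-definable, Lemma~\ref{lem-craig} produces a $\Pi_{n+1}$-definable theory $T'$ that is deductively equivalent to $T^\ast$. Because equivalent theories have the same set of theorems, $T'$ is again complete, still contains $\q$ (indeed $\q+\Pi_n\text{-}{\rm Th}(\mathbb{N})$), and $T'+\Pi_n\text{-}{\rm Th}(\mathbb{N})$ has the same deductive closure as $T^\ast+\Pi_n\text{-}{\rm Th}(\mathbb{N})$, hence is consistent; so $T'$ is the required $\Pi_{n+1}$-definable witness, and its $n$-consistency follows exactly as in part~(1).

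The only point demanding care --- rather than a real obstacle --- is the index arithmetic when applying Lemma~\ref{lem-craig} (matching $\Sigma_{n+2}=\Sigma_{(n+1)+1}$ against its hypothesis), together with the routine check that passing to a deductively equivalent theory preserves completeness and consistency with $\Pi_n\text{-}{\rm Th}(\mathbb{N})$. Everything else is an immediate reading-off of Theorem~\ref{thm-rosser} and the soundness/consistency equivalences already established.
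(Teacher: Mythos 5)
Your proposal is correct and follows essentially the same route as the paper: part (1) reads the witness off Theorem~\ref{thm-rosser} and gets $n$-consistency from $\Sigma_n$-soundness via Proposition~\ref{prop-compare}, and part (2) converts it to a $\Pi_{n+1}$-definable equivalent by Lemma~\ref{lem-craig}. Your version merely spells out the bookkeeping (the $\Sigma_{n+2}=\Sigma_{(n+1)+1}$ index match and the preservation of completeness and consistency under deductive equivalence) that the paper leaves implicit.
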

\centerline{
\fbox{$\textrm{{\bf Corollary~\ref{cor-rosser}}(1)}:\;  \q\subseteq T  \; \& \;  {\sf Axioms}_T\!\in\!\Sigma_{n+2} \; \& \; \big[{\sf Con}\big(T+\Pi_{n}\text{-}{\rm Th}(\mathbb{N})\big)\bigvee n\text{-}{\sf Con}(T)\big] \;\; \not\!\Longrightarrow \;\; T\not\in\textrm{Complete}$}}
\centerline{
\fbox{$\textrm{{\bf Corollary~\ref{cor-rosser}}(2)}:\;  \q\subseteq T  \; \& \;  {\sf Axioms}_T\!\in\!\Pi_{n+1} \; \& \; \big[{\sf Con}\big(T+\Pi_{n}\text{-}{\rm Th}(\mathbb{N})\big)\bigvee n\text{-}{\sf Con}(T)\big] \;\; \not\!\Longrightarrow \;\; T\not\in\textrm{Complete}$}}
\medskip
\begin{proof}
 (1) The $\Sigma_{n+2}$-definable and complete extension of $\q$ in Theorem~\ref{thm-rosser} contains $\Pi_{n}\text{-}{\rm Th}(\mathbb{N})$, and so, being $\Sigma_n$-sound, is $n$-consistent by Proposition~\ref{prop-compare}.

\noindent  (2) The $\Sigma_{n+2}$-definable theory of part (1) is equivalent with a $\Pi_{n+1}$-definable theory by Lemma~\ref{lem-craig}.
\end{proof}

\subsection{A Note on the Constructiveness of the Proofs}
It is interesting to note that for $n\!\geqslant\!3$ all the incompleteness proofs (presented here) with the assumption of $\Sigma_n$-soundness are  constructive (i.e., the independent sentence can be effectively constructed from the given $\Sigma_n$-sound theory satisfying the conditions of $\Sigma_n/\Pi_n$ definability), while all the incompleteness proofs (here) with the assumption of $n$-consistency are all non-constructive (i.e., the independent sentence is not constructed explicitly, and only its mere existence is proved).
Our final result contains a bit of a surprise: even though the proof of Corollary~\ref{cor-gr} is not constructive, no one can present a constructive proof for it.

\begin{theorem}[Non-Constructivity of ${\bf n}$-Consistency Incompleteness]\label{thm-nrec} Let $n\!\geqslant\!3$ be fixed.
There is no recursive function $f$ \textup{(}even with the  oracle $\emptyset^{(n)}$\textup{)} such that when $m$ is a  \textup{(}G\"odel code of a\textup{)} $\Sigma_{n+1}$-formula which defines an   $n$-consistent extension of $\q$, then $f(m)$ is a \textup{(}G\"odel code of a\textup{)} $\Pi_{n+1}$-sentence independent from that theory.
\end{theorem}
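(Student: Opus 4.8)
The plan is to argue by contradiction: assuming such an $f$ exists, I would manufacture from it an $\emptyset^{(n)}$-recursive decision procedure for the truth of $\Pi_{n+1}$-sentences, contradicting the fact that $\Pi_{n+1}\text{-}{\rm Th}(\mathbb{N})$ has Turing degree $\emptyset^{(n+1)}>_{\rm T}\emptyset^{(n)}$. The starting point is three cheap facts that pin down the behaviour of $f$ on well-chosen inputs. Writing $T_0=\q+\Pi_n\text{-}{\rm Th}(\mathbb{N})$, first note that every extension $T\supseteq T_0$ is $\Sigma_{n+1}$-complete, so an independent $\sigma\in\Pi_{n+1}$ of such a $T$ must be \emph{true} (a false $\sigma$ would have $\neg\sigma\in\Sigma_{n+1}\text{-}{\rm Th}(\mathbb{N})$, hence provable, hence $\sigma$ decided); thus on inputs extending $T_0$ the value $f(m)$ is always a true $\Pi_{n+1}$-sentence unprovable in $T_m$. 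Second, $T_0+\sigma$ is $n$-consistent iff it is consistent iff $\sigma$ is true. Third — and this is what makes the range $n\geq 3$ usable — if $T_0\not\vdash\sigma$ then $T_0+\neg\sigma$ is $n$-consistent: since $T_0$ is $\Pi_n$-complete it refutes every false $\Sigma_n$-sentence, so every $\Sigma_n$-consequence of $T_0+\neg\sigma$ is true, i.e.\ $T_0+\neg\sigma$ is $\Sigma_n$-sound and hence $n$-consistent by Proposition~\ref{prop-compare}(1), even though it is \emph{not} sound.

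The reduction I would build uses the recursion theorem to make $f$ ``commit'' on a self-referential theory. Given the numeral for its own $\Sigma_{n+1}$-index $e$, the theory $T_e$ is set to be $T_0$ when $T_0\vdash f(e)$ and to be $T_0+\neg f(e)$ otherwise. By the three facts above, in either case $T_e$ is an $n$-consistent $\Sigma_{n+1}$-definable extension of $\q$ that \emph{decides} $f(e)$ (proving $f(e)$ in the first case, $\neg f(e)$ in the second), so $f(e)$ is not independent from $T_e$ — the desired contradiction. Everything but one ingredient is harmless at level $\Sigma_{n+1}$ with oracle $\emptyset^{(n)}$: the sentence $f(e)$ is obtained from $e$ by $f$, and the two candidate axiom-sets are genuine $\Pi_n$-formulas.

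The main obstacle — and the reason the statement is delicate, failing for $n\leq 2$ and for weaker oracles — is the single branching test ``$T_0\vdash f(e)$''. Since ${\sf Axioms}_{T_0}\in\Pi_n$ gives only ${\sf Prov}_{T_0}\in\Sigma_{n+1}$, this test is $\Sigma_{n+1}$ and its negation $\Pi_{n+1}$, so deciding it in order to lay down the correct axioms of $T_e$ appears to need $\emptyset^{(n+1)}$, exactly one level above what is permitted; encoding it \emph{inside} the defining formula instead pushes the definition up to $\Sigma_{n+2}$, a level at which the phenomenon genuinely changes (Theorem~\ref{thm-rosser} even produces a \emph{complete} such theory). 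Overcoming this is the crux: I would aim to replace the forbidden $\Pi_{n+1}$ test by an honestly $\emptyset^{(n)}$-recursive search that trades on the $n$-consistency/$\Sigma_n$-soundness gap available for $n\geq 3$, arranging the self-reference so that the resulting theory is \emph{provably} $n$-consistent without the construction ever needing the truth-value of the $\Pi_{n+1}$-sentence $f(e)$; and I would have to guard against the standard trap whereby a too-naive self-referential definition diverges, thereby defining no $n$-consistent theory, so that $f$ incurs no obligation and the contradiction evaporates. Pushing the branch below $\emptyset^{(n+1)}$ while certifying $n$-consistency is the heart of the proof.
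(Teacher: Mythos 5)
Your setup is on the right track --- a self-referential theory obtained from the recursion theorem/diagonal lemma, forced by a case distinction to decide $f(e)$, is exactly the shape of the paper's argument, and your three preliminary facts are correct. But you have not proved the theorem: you explicitly leave open the one step that carries all the difficulty, namely how to branch on a condition that (i) is expressible at low enough complexity that the resulting axiom set stays $\Sigma_{n+1}$, and (ii) still guarantees that \emph{both} branches yield an $n$-consistent theory deciding $f(e)$. Your candidate test ``$T_0\vdash f(e)$'' with $T_0=\q+\Pi_n\text{-}{\rm Th}(\mathbb{N})$ is $\Sigma_{n+1}$, and as you observe, neither computing it from $\emptyset^{(n)}$ nor folding it into the defining formula keeps you at level $\Sigma_{n+1}$; acknowledging that ``pushing the branch below $\emptyset^{(n+1)}$ is the heart of the proof'' is a statement of the problem, not a solution.

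The missing idea is to change the dichotomy. Work over $\q$ rather than over $\q+\Pi_n\text{-}{\rm Th}(\mathbb{N})$, and branch not on provability of $\sigma=f(\ulcorner\Theta\urcorner)$ but on whether $\q+\sigma$ is \emph{$\omega$-consistent}. This predicate is expressible by a fixed $\Pi_3$ arithmetical formula $\boldsymbol\omega\text{-}{\sf Con}_{\q}(x)$, so for $n\geqslant 3$ both it and its negation sit inside $\Delta_{n+1}$ and can be written directly into the $\Sigma_{n+1}$ formula $\Theta(x)$ produced by the diagonal lemma (with a harmless extra disjunct $x={\tt q}$ so that the theory contains $\q$ even if $f(\ulcorner\Theta\urcorner)\!\uparrow$, which disposes of your divergence worry). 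The payoff is a theorem of Isaacson (\cite[Theorem~21]{isaacson}): for any $\omega$-consistent $S$ and any sentence $X$, at least one of $S+X$, $S+\neg X$ is $\omega$-consistent. Hence whichever branch $\Theta$ selects, the theory $\mathcal{T}_\Theta$ is $\q+f(\ulcorner\Theta\urcorner)$ or $\q+\neg f(\ulcorner\Theta\urcorner)$ and is $\omega$-consistent, therefore $n$-consistent, yet it has $f(\ulcorner\Theta\urcorner)$ or its negation as an axiom --- contradicting independence. This is precisely where the hypothesis $n\geqslant 3$ is used ($\Pi_3\subseteq\Delta_{n+1}$), which is a different role than the one you assign it (your ``third fact'' about $\Sigma_n$-soundness of $T_0+\neg\sigma$ holds for all $n$ and is not what makes the bound tight). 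Without some such low-complexity, always-safe dichotomy, your argument does not close.
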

\begin{proof}
Assume that there is   an $\emptyset^{(n)}$-recursive function $f$ such that for any given $\Sigma_{n+1}$-formula $\Psi(x)$ if the theory $\mathcal{T}_\Psi=\{\alpha\mid \mathbb{N}\models\Psi(\ulcorner\alpha\urcorner)\}$ is an $n$-consistent extension of $\q$ then $f(\ulcorner\Psi\urcorner)$ is (the G\"odel code of) a $\Pi_{n+1}$-sentence such that $\mathcal{T}_\Psi\not\vdash f(\ulcorner\Psi\urcorner)$ and $\mathcal{T}_\Psi\not\vdash\neg f(\ulcorner\Psi\urcorner)$. The $\omega$-consistency of $\q$ with $x$ can be written  by the   $\Pi_3$-formula   $\boldsymbol\omega\text{-}{\sf Con}_{\q}(x)=\forall\chi\big[\exists z{\sf Proof}\big(z,\texttt{q}\!\wedge\!x\!\rightarrow\!\exists v\chi(v)\big)\rightarrow\exists v \forall z \neg{\sf Proof}\big(z,{\tt q}\!\wedge\!x\!\rightarrow\!\neg\chi(\overline{v})\big)]$,  where ${\tt q}$ is the G\"odel code of the conjunction of the finitely many axioms of $\q$ (see the Proof of Theorem~\ref{thm-rosser}).
By $\emptyset^{(n)}$-recursiveness of $f$ the expressions $y\!=\!f(x)$ and $f(z)\!\downarrow$ can be written by $\Sigma_{n+1}$-formulas (see e.g.~\cite{robic}). By Diagonal Lemma there exists some $\Sigma_{n+1}$-formula $\Theta(x)$ such that

\begin{tabular}{r c l l}
$\Theta(x)$ & $\equiv$ & $\big[f(\ulcorner\Theta\urcorner)\!\downarrow \wedge \; \boldsymbol\omega\text{-}{\sf Con}_{\q}\big(f(\ulcorner\Theta\urcorner)\big) \wedge \,  \big(x\!=\!f(\ulcorner\Theta\urcorner)\vee x\!=\!{\tt q}\big)\big]$ & $\bigvee$ \\
 &  &  $\big[f(\ulcorner\Theta\urcorner)\!\downarrow \wedge \; \neg\boldsymbol\omega\text{-}{\sf Con}_{\q}\big(f(\ulcorner\Theta\urcorner)\big) \wedge \,  \big(x\!=\!\neg f(\ulcorner\Theta\urcorner)\vee x\!=\!{\tt q}\big)\big]$ & $\bigvee$ \\
 & & $(x={\tt q})$. &
\end{tabular}

\noindent
Now, if  $f(\ulcorner\Theta\urcorner)\!\uparrow$ then $\Theta(x)\equiv (x\!=\!{\tt q})$ and so $\mathcal{T}_\Theta\!=\!\q$  is an $n$-consistent extension of $\q$, whence  
$f(\ulcorner\Theta\urcorner)\!\downarrow$; contradiction. Thus, $f(\ulcorner\Theta\urcorner)\!\downarrow$. If $\q\cup\{f(\ulcorner\Theta\urcorner)\}$ is $\omega$-consistent then we have $\Theta(x)\equiv (x\!=\!f(\ulcorner\Theta\urcorner) \vee x\!=\!{\tt q})$ and so $\mathcal{T}_\Theta=\q\cup
\{f(\ulcorner\Theta\urcorner)\}$ is an $n$-consistent extension of $\q$, whence $f(\ulcorner\Theta\urcorner)$ should be independent from it; contradiction. So, $\q\cup\{f(\ulcorner\Theta\urcorner)\}$ is not $\omega$-consistent; then by \cite[Theorem~21]{isaacson} (which states that for any $\omega$-consistent theory $S$ and any sentence $X$ either $S\cup\{X\}$ or $S\cup\{\neg X\}$ is $\omega$-consistent) the theory $\q\cup\{\neg f(\ulcorner\Theta\urcorner)\}$ should be $\omega$-consistent.
But in this case we have
$\Theta(x)\equiv (x\!=\!\neg f(\ulcorner\Theta\urcorner)\vee x\!=\!{\tt q})$ and so $\mathcal{T}_\Theta=\q\cup\{\neg f(\ulcorner\Theta\urcorner)\}$ is an $n$-consistent extension of $\q$, whence $f(\ulcorner\Theta\urcorner)$ should be independent from it; contradiction again. Thus there can be no such $\emptyset^{(n)}$-recursive function.
\end{proof}

\begin{remark}[Optimality of Theorem~\ref{thm-nrec}]{\rm
Even though, by Theorem~\ref{thm-nrec}, there does not exist any $\emptyset^{(n)}$-recursive function (for $n\!>\!2$) which can output an independent $\Pi_{n+1}$-sentence for a given $\Sigma_{n+1}$-definable and $n$-consistent extension of $\q$, there indeed exists some $\emptyset^{(n+1)}$-recursive function which can find such an independent $\Pi_{n+1}$-sentence (for a given $\Sigma_{n+1}$-definition of an  $n$-consistent extension of $\q$): By having an access to the oracle $\emptyset^{(n+1)}$ for  a given ${\sf Ax}_T\!\in\!\Sigma_{n+1}$, provability (or unprovability) in $T$ of a given sentence is decidable; thus (since by Corollary~\ref{cor-gr}  there must exist some $\Pi_{n+1}$-sentence independent from the theory $T$) by an exhaustive  search through all the $\Pi_{n+1}$-sentences such an independent sentence can be eventually found.
}\hfill\ding{71}\end{remark}


\section{Conclusions}\label{sec-concl}

Summing up, G\"odel first incompleteness theorem in its semantic form, which states the $\Pi_1$-incompleteness of any sound and $\Sigma_1$-definable extension of $\q$, can be generalized to show that any sound and $\Sigma_n$-definable extension of $\q$ is $\Pi_n$-incomplete. Also, G\"odel's original first incompleteness theorem, which is equivalent to the $\Pi_1$-undecidability  of  any $\Sigma_1$-sound and  $\Sigma_1$-definable extension of $\q$, can be generalized to show that no $\Sigma_n$-sound and $\Sigma_n$-definable extension of $\q$  is $\Pi_n$-deciding (here actually $\Sigma_{n-1}$-soundness suffices by Rosser's Trick). Finally, Rosser's incompleteness theorem, which states the $\Pi_1$-undecidability  of any consistent and $\Sigma_1$-definable extension of $\q$, cannot be generalized to definable theories, not even to $\Pi_1$-definable ones.  Concluding,  we have the following table for $n\!>\!1$ which shows our results in a viewable perspective:

\begin{table}[h]
\begin{center}
    \begin{tabular}{|l||lll|}
  \hline
 G\"odel's 1$^{\rm st}$  {\footnotesize (Semantic)} & $\q\subseteq T$ \& ${\sf Axioms}_T\!\in\!\Sigma_1$ \& 
 $T$ is ({\small $\Sigma_\infty\!$})Sound &\!$\Longrightarrow$\!& $T$\, $\not\in$\, $\Pi_1-$Complete    \\
   \hline
 Theorem~\ref{prop-godel}  & $\q\subseteq T$ \& ${\sf Axioms}_T\!\in\!\Sigma_{n}$ \&  
 $T$ is ({\small $\Sigma_\infty\!$})Sound &\!$\Longrightarrow$\!& $T$\, $\not\in$\, $\Pi_{n}-$Complete    \\
   \hline
   G\"odel's 1$^{\rm st}$   & $\q\subseteq T$ \& ${\sf Axioms}_T\!\in\!\Sigma_{1}$ \&  $T$ is \,$\Sigma_1-$Sound &\!$\Longrightarrow$\!& $T$\, $\not\in$\, $\Pi_{1}-$Deciding     \\
    \hline
   Corollary~\ref{cor-1st}  & $\q\subseteq T$ \& ${\sf Axioms}_T\!\in\!\Sigma_{n}$ \& $T$ is \,$\Sigma_n\!-$Sound  &\!$\Longrightarrow$\!& $T$\, $\not\in$\, $\Pi_{n}-$Deciding     \\
  \hline
         G\"odel--Rosser & $\q\subseteq T$ \& ${\sf Axioms}_T\!\in\!\Sigma_{1}$ \& $T$ is \,$\Sigma_0-$Sound &\!$\Longrightarrow$\!& $T$\, $\not\in$\, $\Pi_{1}-$Deciding     \\
  \hline
   Corollary~\ref{cor-2nd}  & $\q\subseteq T$ \& ${\sf Axioms}_T\!\in\!\Sigma_{n}$ \& $T$ is  $\Sigma_{n\!-\!1}\!$ Sound  &\!$\Longrightarrow$\!& $T$\, $\not\in$\, $\Pi_{n}-$Deciding     \\
   \hline
     Corollary~\ref{cor-rosser}(1)  & $\q\subseteq T$ \& ${\sf Axioms}_T\!\in\!\Sigma_{n}$ \& $T$ is  $\Sigma_{n\!-\!2}\!$ Sound  & $\not\!\Longrightarrow$\!& $T$\, $\not\in$\, Complete     \\
   \hline
  \hline
   G\"odel's 1$^{\rm st}$   & $\q\subseteq T$ \& ${\sf Axioms}_T\!\in\!\Sigma_{1}$ \& \;\qquad\;  $1\text{-}{\sf Con}(T)$ &\!$\Longrightarrow$\!& $T$\, $\not\in$\, $\Pi_{1}-$Deciding     \\
    \hline
    Corollary~\ref{cor-g2}   & $\q\subseteq T$ \& ${\sf Axioms}_T\!\in\!\Sigma_{n}$ \& \,\qquad\;   $n\text{-}{\sf Con}(T)$ &\!$\Longrightarrow$\!& $T$\, $\not\in$\, $\Pi_{n}-$Deciding     \\
    \hline
    G\"odel--Rosser & $\q\subseteq T$ \& ${\sf Axioms}_T\!\in\!\Sigma_{1}$ \& \qquad\;\quad\,  ${\sf Con}(T)$ &\!$\Longrightarrow$\!& $T$\, $\not\in$\, $\Pi_{1}-$Deciding     \\
  \hline
  Corollary~\ref{cor-gr}   & $\q\subseteq T$ \& ${\sf Axioms}_T\!\in\!\Sigma_{n}$ \& \; $(n\!-\!1)\text{-}{\sf Con}(T)$ &\!$\Longrightarrow$\!& $T$\, $\not\in$\, $\Pi_{n}-$Deciding     \\
    \hline
            Corollary~\ref{cor-rosser}(1)  & $\q\subseteq T$ \& ${\sf Axioms}_T\!\in\!\Sigma_{n}$ \& \; $(n\!-\!2)\text{-}{\sf Con}(T)$  & $\not\!\Longrightarrow$\!& $T$\, $\not\in$\, Complete     \\
   \hline
   \end{tabular}
\vspace{-1.5em}
\end{center}
 \end{table}

\noindent To complete the picture here are  the $\Pi$ version of the results for $m\!>\!0$:

   \begin{table}[h]
\begin{center}
    \begin{tabular}{|l||lll|}
   \hline
  Theorem~\ref{thm-1st}  & $\q\subseteq T$ \& ${\sf Axioms}_T\!\in\!\Pi_{m}$ \& $T$ is \,$\Sigma_m\!-$Sound  &\!$\Longrightarrow$\!& $T$\, $\not\in$\, $\Pi_{m+1}-$Deciding     \\
          \hline
   Corollary~\ref{cor-rosser}(2)  & $\q\subseteq T$ \& ${\sf Axioms}_T\!\in\!\Pi_{m}$ \& $T$ is $\Sigma_{m\!-\!1}\!$ Sound   & $\not\!\Longrightarrow$\!& $T$\, $\not\in$\, Complete    \\
   \hline
     Corollary~\ref{cor-hajek2}  & $\q\subseteq T$ \& ${\sf Axioms}_T\!\in\!\Pi_{m}$ \& \,\qquad\; $m\text{-}{\sf Con}(T)$  &\!$\Longrightarrow$\!& $T$\, $\not\in$\, $\Pi_{m+1}-$Deciding     \\
   \hline
   Corollary~\ref{cor-rosser}(2)  & $\q\subseteq T$ \& ${\sf Axioms}_T\!\in\!\Pi_{m}$ \& \; $(m\!-\!1)\text{-}{\sf Con}(T)$   & $\not\!\Longrightarrow$\!& $T$\, $\not\in$\, Complete    \\
   \hline
\end{tabular}
\vspace{-1.5em}
\end{center}
  \end{table}



\end{document}